\def\Zz{\mathbb{Z}} 
\def\Rr{\mathbb{R}} 
\def\Cc{\mathbb{C}} 
\def\NN{\mathcal{N}} 
\def\ker{{\rm ker}\,} 
\def\Hom{{\rm Hom}} 
\def\SW{Stiefel-Whitney\ } 
\def\X{X^m} 
\def\phi{\varphi}
\newtheorem{thm}{Theorem}[section]
\newtheorem{cor}[thm]{Corollary}
\newtheorem{prop}[thm]{Proposition}
\newtheorem{lem}[thm]{Lemma}
\theoremstyle{definition}
\newtheorem{defn}[thm]{Definition}
\theoremstyle{remark}
\newtheorem{rem}[thm]{Remark}
\newtheorem{ex}[thm]{Example}
\begin{document}
%
\title{$\Zz_2$-bordism and the Borsuk-Ulam Theorem} 
\author{M.~C.~Crabb}
\address{Department of Mathematics,\\
University of Aberdeen,\\
Aberdeen, AB24 3UE, UK}
\email{m.crabb@abdn.ac.uk}
\author{D.~L.~Gon\c calves}
\address{Departamento de Matem\'atica \\ 
IME - Universidade de S\~ao Paulo \\ 
Caixa Postal 66281 \\ 
Ag. Cidade de S\~ao Paulo \\S\~ao Paulo,
SP 05314-970, Brazil}
\email{dlgoncal@ime.usp.br}
\author{A.~K.~M.~Libardi}
\address{Departamento de Matem\'atica \\ 
IGCE - UNESP \\ 
Rio Claro, SP 13506-900, Brazil}
\email{alicekml@ms.rc.unesp.br}
\author{P.~L.~Q.~Pergher}
\address{Departamento de Matem\'atica \\ 
Universidade Federal de S\~ao Carlos \\ 
Caixa Postal 676 \\
S\~ao Carlos, SP 13565-905, Brazil}
\email{pergher@dm.ufscar.br}
\thanks{Research partially supported by CNPq and FAPESP-Funda\c c\~ao 
de Amparo a Pesquisa do Estado de S\~ao Paulo, 
Projeto Tem\'atico Topologia Alg\'ebrica, 
Geom\'etrica e Diferencial 2012/24454-8}
\subjclass[2010]{Primary 55M20; Secondary 57R85, 57R75, 55M35}  
\keywords{free involution, Borsuk-Ulam property, line bundle,  
(equivariant) bordism group, Euler class,  \SW numbers}

\begin{abstract}
The purpose of this work is to classify, for given integers $m,\, n\geq 1$,
the bordism class
of a closed smooth $m$-manifold $\X$ with a free smooth involution $\tau$
with respect to the validity of
the {\it Borsuk-Ulam property} that
for every continuous map $\phi : \X \to \Rr^n$
there exists a point $x\in\X$ such that $\phi (x)=\phi (\tau (x))$. 
We will classify a given free $\Zz_2$-bordism class $\alpha$
according to the three possible cases that
(a) all representatives   $(\X , \tau)$ of $\alpha$ 
satisfy the Borsuk-Ulam property; 
\  (b)  there are representatives $(\X_ 1, \tau_1)$ and $(\X_2, \tau_2)$
of $\alpha$ such that $(\X_1, \tau_1)$
satisfies  the Borsuk-Ulam property  but   
$(\X_2, \tau_2)$ does not;
\ (c)  no representative $(\X , \tau)$ of $\alpha$ 
satisfies the Borsuk-Ulam property.
\end{abstract}
\maketitle
\section{ \bf Introduction}
Let $((X,\tau);Y)$ be a triple where $X$ and $Y$ are topological spaces and 
$\tau: X \longrightarrow X$ is a free
involution. We say that $((X,\tau); Y)$  satisfies  
the {\it Borsuk-Ulam property} (which we shall routinely abbreviate to BUP) 
if for every continuous map $\phi :X \longrightarrow Y$ 
there exists a point $x \in X$ such that
$\phi (x)=\phi (\tau(x))$ 
(or, equivalently, we say that  
the Borsuk-Ulam property holds for $((X,\tau); Y)$). 
We also say that the pair $(X,\tau)$ satisfies the Borsuk-Ulam property 
with respect to maps into $Y$.

Let $S^m$ be the $m$-dimensional sphere, $A:S^m \to S^m$ 
the antipodal involution and $\Rr^n$ the
$n$-dimensional Euclidean space. 
The famous Borsuk-Ulam Theorem states that, if $\phi : S^m \to \Rr^m$
is any continuous map, then there exists a point $x\in  S^m$ 
such that $\phi (x)=\phi (A(x))$ (see \cite{Borsuk}). We remark
that, as an easy consequence of the Borsuk-Ulam Theorem, 
$((S^m,A); \Rr^n)$ satisfies the Borsuk-Ulam property for $n \leq m$,
but the BUP does not hold for $((S^m,A); \Rr^n)$ if $n>m$, because $S^m$
embeds in $\Rr^n$. 

After the comments above a natural generalization of the 
Borsuk-Ulam Theorem consists in replacing
$S^m$ by a connected, closed $m$-dimensional smooth manifold $\X$ and $A$
by  a free smooth involution $\tau$ defined on $\X$, 
and then, considering continuous maps from $\X$ into $\Rr^n$ ($n>0$), 
to ask which triples $((\X ,\tau); \Rr^n)$
satisfy the  Borsuk-Ulam  property (BUP). It is known
that if $n>m$, the BUP does not hold (see \cite{GHZ}).  
This leaves the interesting cases where $n\leq m$ and $m>1$. 
For $n=2$ the problem is equivalent to an algebraic problem involving the 
fundamental group;
see, for example, \cite{GHZ} 
or Lemmas \ref{Lemma2.4} and \ref{dim2}.
For $n=m\geq 2$ the problem is well understood in terms
of the cohomology of the orbit space, as we explain below.

A major problem is to find the greatest $n\leq m$ such that the BUP 
holds for a specific $(\X ,\tau)$.
Since for $m>1$ and $n=1$ the BUP always holds, this greatest integer  
will be greater than or equal to $1$.  This is
equivalent to finding all values $n$ for which the BUP  
holds for  $((\X ,\tau); \Rr^n)$.
The greatest integer $n$ for which the BUP holds for  
the triples above is also known in the literature
as the {\it index} of $\tau$ on $\X$.
Unfortunately, at this level of generality the problem seems too hard.  
In this paper we shall tackle the problem up to bordism.

Before describing our results,
we fix some notation that will be used throughout the paper.
The basic object of interest is a pair $(\X ,\tau )$ consisting of
a closed smooth manifold $\X$ of dimension $m\geq 1$ and a smooth free
involution $\tau :\X \to\X$. 
We do not always require $\X$ to be connected.
The orbit space $M=\X /\tau$ is a closed smooth $m$-manifold.
We shall write $\lambda$ for the orthogonal real line bundle over $M$ 
associated with the double cover $\X \to \X /\tau$ and
$f: M\to BO (1)$ for its classifying map.

Let us begin with the cohomological criterion from \cite[Theorem 3.4]{GHZ} 
for the case $n=m$.  
\begin{thm}
Consider a pair $(\X ,\tau )$ with corresponding real line bundle
$\lambda$ over $M=\X /\tau$. 
Then the Borsuk-Ulam property holds for $((\X ,\tau); \Rr^m)$ if
and only if the $m$-fold cup product  $w_1(\lambda )^m$ is non-zero.
\end{thm}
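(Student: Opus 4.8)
The plan is to convert the Borsuk--Ulam property into the non-existence of a nowhere-zero section of a vector bundle over $M$, and then to settle the section problem by obstruction theory in the critical dimension. First I would observe that a continuous map $\phi:\X\to\Rr^m$ has $\phi(x)\neq\phi(\tau x)$ for every $x$ if and only if $g(x):=\phi(x)-\phi(\tau x)$ is a continuous map $\X\to\Rr^m\setminus\{0\}$ equivariant for $\tau$ on the source and the antipodal map $v\mapsto-v$ on $\Rr^m$; conversely any such $g$ arises this way from $\phi=\tfrac12 g$. After normalising $g$, the existence of such a map is the same as the existence of a $\Zz_2$-equivariant map $\X\to S^{m-1}$, i.e.\ of a nowhere-zero section of the rank-$m$ bundle $\X\times_{\Zz_2}\Rr^m\cong m\lambda:=\lambda\oplus\dots\oplus\lambda$ over $M$. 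Thus the theorem is equivalent to the assertion that $m\lambda$ has a nowhere-zero section if and only if $w_1(\lambda)^m=0$.

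One implication is immediate: a nowhere-zero section of $m\lambda$ splits off a trivial line subbundle, so $w_m(m\lambda)=0$; and by the Whitney sum formula $w(m\lambda)=(1+w_1(\lambda))^m$, whose degree-$m$ component is $w_1(\lambda)^m$. Hence $w_1(\lambda)^m\neq0$ forces the Borsuk--Ulam property to hold.

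For the converse I would assume $w_1(\lambda)^m=0$ and build the section skeleton-by-skeleton over $M$. Since the fibre $S^{m-1}$ of $S(m\lambda)\to M$ is $(m-2)$-connected, a section exists over the $(m-1)$-skeleton, and --- the dimension of $M$ being exactly $m$ --- the only remaining obstruction to a global section is the twisted Euler class $e(m\lambda)\in H^m(M;\Zz_w)$, where $\Zz_w$ is $\Zz$ with $\pi_1(M)$ acting through $w_1(m\lambda)=m\,w_1(\lambda)$; by standard obstruction theory this class is both necessary and sufficient for a nowhere-zero section. Its mod-$2$ reduction is $w_m(m\lambda)=w_1(\lambda)^m$, which vanishes by hypothesis. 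The point I would then establish is that $e(m\lambda)$ is \emph{always} $2$-torsion: the twisted Euler class $e(\lambda)\in H^1(M;\Zz_{w_1(\lambda)})$ of a line bundle is $2$-torsion (it is pulled back from the universal twisted Euler class in $H^1(\Rr P^\infty;\Zz_w)\cong\Zz_2$; equivalently it is a twisted Bockstein class), and $e(m\lambda)=e(\lambda)^m$ by multiplicativity of the Euler class, so $2e(m\lambda)=0$. Finally, $M$ being a closed $m$-manifold, twisted Poincar\'e duality identifies $H^m(M;\Zz_w)$, on each connected component, with $\Zz$ or with $\Zz_2$; in particular it has no element of order $4$, so the mod-$2$ reduction is injective on its $2$-torsion subgroup. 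Therefore $w_1(\lambda)^m=0$ forces $e(m\lambda)=0$, the section exists, and the Borsuk--Ulam property fails.

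The hard part will be the converse direction, for two reasons: one must be careful that in the critical degree the twisted Euler obstruction really is both necessary and sufficient (the difference-cochain argument), and the $2$-torsion computation must be organised so that it applies uniformly even when $M$ or the bundle $m\lambda$ is non-orientable --- this is precisely what allows the mod-$2$ class $w_1(\lambda)^m$ to detect the integral (twisted) obstruction. I would also record the equivalent topological reformulation, which may be cleaner to work with: a $\Zz_2$-equivariant map $\X\to S^{m-1}$ is the same as a homotopy compression of the classifying map $f:M\to\Rr P^\infty$ of $\lambda$ through the inclusion $\Rr P^{m-1}\hookrightarrow\Rr P^\infty$, and analysing the obstruction to such a compression leads to exactly the same computation.
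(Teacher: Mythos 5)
Your proposal is correct and follows essentially the same route as the paper: Lemma~\ref{Lemma2.3} is your reduction of the Borsuk--Ulam property to the existence of a nowhere-zero section of $m\lambda=\Rr^m\otimes\lambda$, and Lemma~\ref{Lemma2.4} is exactly your twisted-Euler-class argument (the class $e(m\lambda)=e(\lambda)^m$ is $2$-torsion, Poincar\'e duality forces $H^m(M;\Zz(m\lambda))$ to be $\Zz$ or $\Zz/2\Zz$ on each component, and in either case mod~$2$ reduction to $w_1(\lambda)^m$ detects it). The only cosmetic difference is that you handle the easy direction via $w_m(m\lambda)$ directly rather than through the integral class, which changes nothing of substance.
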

Recently Musin \cite{Musin} showed how to characterize
the Borsuk-Ulam property in terms of  $\Zz_2$-bordism classes. 
Referring to Section 2 for more details,
we denote the group of bordism classes of $m$-dimensional pairs
$(\X ,\tau )$ by $\NN_m(\Zz_2)$, writing 
$[\X , \tau]$ for the $\Zz_2$-bordism class of $(\X , \tau)$.
We recall from \cite[p.~74]{BtD}
that $\NN_*(\Zz_2)$ is a free graded module
over the unoriented bordism ring $\NN_*$
with one generator $p_i= [S^i, A]$ in each dimension $i\geq 0$, 
where $A$ is the antipodal involution on the sphere $S^i$.
Musin's result \cite[Theorem 2]{Musin},
for smooth rather than PL manifolds, can be formulated as follows.
\begin{thm} 
Let  $\alpha \in \NN_m(\Zz_2)$, where $m > 1$, be written as
$\alpha =a_0p_m+ a_1p_{m-1}+\ldots +a_{m-1}p_1+a_mp_0$ 
where $a_i\in \NN_{i}$. 
Then
\par\noindent
\hbox to\parindent{\rm (i)\hfil}
For any representative $(\X ,\tau )$ of $\alpha$, the coefficient
$a_0$ is equal to the characteristic number $w_1(\lambda )^m[M]$
of the associated line bundle $\lambda$ over $M=\X /\tau$.
\par\noindent
\hbox to\parindent{\rm (ii)\hfil}
There is a representative $(\X ,\tau )$ of $\alpha$ with $\X$ connected.
\par\noindent
\hbox to\parindent{\rm (iii)\hfil}
If $a_0=1$, then for every representative $(\X ,\tau )$ of $\alpha$
the BUP holds for $((\X ,\tau ); \Rr^m)$.
\par\noindent
\hbox to\parindent{\rm (iv)\hfil}
If $a_0=0$, then there is no representative $(\X ,\tau )$ of $\alpha$ with
$\X$ connected such that the BUP holds for $((\X ,\tau ); \Rr^m)$.
\end{thm}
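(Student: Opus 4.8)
The plan is to prove the four statements essentially in the order (i), (ii), and then (iii) and (iv) together, since the last two will follow at once from (i) and the cohomological criterion quoted above. For \emph{Part (i)} I would introduce the characteristic-number homomorphism $\chi\colon\NN_m(\Zz_2)\to\Zz_2$ given by $\chi[\X,\tau]=w_1(\lambda)^m[M]$, where $M=\X/\tau$ and $\lambda$ is the associated real line bundle. This is a $\Zz_2$-bordism invariant by the usual argument: if $(\X,\tau)$ bounds a compact pair $(W,T)$ with $T$ free, then $\lambda$ extends over the compact manifold-with-boundary $W/T$, and an $m$-dimensional cohomology class pulled back from $W/T$ pairs trivially with the boundary $M$. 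Since $\chi$ is clearly additive and $\NN_*(\Zz_2)$ is free over $\NN_*$ on the classes $p_i=[S^i,A]$, it suffices to evaluate $\chi$ on the summands $a_ip_{m-i}$ of $\alpha$. Writing $a_i=[N^i]$, the class $a_ip_{m-i}$ is represented by $N^i\times S^{m-i}$ with the free involution $\mathrm{id}\times A$, whose orbit space is $N^i\times\Rr P^{m-i}$ with $\lambda$ pulled back from the tautological line bundle over $\Rr P^{m-i}$; hence $w_1(\lambda)^m$ is pulled back from $H^m(\Rr P^{m-i};\Zz_2)=0$ (as $m>m-i$) whenever $i\ge1$. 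For $i=0$ one obtains $(\Rr P^m,\text{tautological})$, for which $w_1^m[\Rr P^m]=1$. Adding up, $\chi(\alpha)=a_0$, and by bordism invariance $w_1(\lambda)^m[M]=a_0$ for every representative of $\alpha$.

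For \emph{Part (ii)} I would argue by equivariant surgery on $0$-spheres. If $\X$ is disconnected, pick points $p,q$ in two distinct components; after a small perturbation the four points $p,q,\tau(p),\tau(q)$ have pairwise disjoint closed disc neighbourhoods (freeness keeps the discs around $p$ and $\tau(p)$ disjoint even when those points lie in the same component). Simultaneous surgery on $\{p,q\}$ and on $\tau\{p,q\}$ — gluing in a tube $D^1\times S^{m-1}$ joining the discs at $p$ and $q$, together with its $\tau$-image joining the discs at $\tau(p)$ and $\tau(q)$ — produces a pair with strictly fewer components whose involution is again free, since the two tubes are disjoint and interchanged by it. The trace of this surgery is an equivariant cobordism carrying a free involution, so the new pair still represents $\alpha$; iterating yields a connected representative.

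Finally, \emph{Parts (iii) and (iv)} are formal. By (i), $a_0=w_1(\lambda)^m[M]$ for every representative $(\X,\tau)$. If $a_0=1$, then $w_1(\lambda)^m$ is non-zero on some component of $M$, hence non-zero in $H^m(M;\Zz_2)$, so the cohomological criterion gives the BUP for $((\X,\tau);\Rr^m)$; since this applies to every representative, (iii) follows. If $a_0=0$ and $(\X,\tau)$ is a connected representative, then $M$ is connected, so $H^m(M;\Zz_2)\cong\Zz_2$ and $w_1(\lambda)^m[M]=0$ forces $w_1(\lambda)^m=0$; the criterion then says the BUP fails for $((\X,\tau);\Rr^m)$, which is (iv). I expect the only part requiring genuine geometric care to be (ii) — carrying out the equivariant surgery while keeping the involution free and smooth, and confirming that a component-reducing move is always available when $\X$ is disconnected — whereas (i) is a routine characteristic-number computation and (iii), (iv) are immediate consequences of it and the quoted criterion.
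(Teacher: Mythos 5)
Your proof of parts (i), (iii) and (iv) is essentially the same as the paper's: (i) is computed on the $\NN_*$-module generators of $\NN_*(\Zz_2)$ by observing that $a_ip_{m-i}$ is represented by $N^i\times\Rr P^{m-i}$ with $\lambda$ pulled back from $\Rr P^{m-i}$, so $w_1(\lambda)^m$ dies for $i>0$ and equals $1$ for $i=0$; (iii) and (iv) then follow formally from (i) together with the cohomological criterion (Theorem 1.1), using that $H^m(M;\Zz_2)\cong\Zz_2$ when $M$ is connected. For part (ii) you take a genuinely different (though equally valid) route. The paper passes to the orbit space $M$ via the isomorphism $\NN_m(\Zz_2)\cong\NN_m(BO(1))$, adds two copies of $(\Rr P^m,h)$ to the representative, and then performs non-equivariant connected sums on $M$ (Proposition~3.1); this has the advantage of producing a connected $M'$ containing an embedded $\Rr P^{m-1}$ on which $\lambda'$ is the Hopf bundle, a stronger conclusion that the paper exploits later to prove Theorem~1.4(i). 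You instead do equivariant $0$-surgery directly on $\X$: join two components by a tube at $\{p,q\}$ and simultaneously attach the $\tau$-image of the tube at $\{\tau p,\tau q\}$; since the tubes are interchanged by the extended involution it remains free, the trace is a free $\Zz_2$-cobordism, and the component count strictly decreases, so iteration terminates. This is a clean, self-contained way to get exactly statement (ii), at the cost of not also producing the embedded $\Rr P^{m-1}$ that the paper wants for the codimension-$n$ argument. Two small points you might make explicit: one should first replace an empty representative of $\alpha=0$ by a non-empty null-bordant one before starting; and the hypothesis $m>1$ is what guarantees that removing the small discs does not disconnect a component, so the tube genuinely merges components.
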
 
A short proof, using quite different
techniques from those employed in \cite{Musin}
will be given in Section 3. 
We are interested in generalizing the result when
$\Rr^m$ is replaced by $\Rr^n$ for $n\leq m$.
More precisely we consider the following question.

{\bf Question}: Given 
a free $\Zz_2$-bordism class $\alpha\in \NN_m(\Zz_2)$ and an integer  $n$, 
can we find a representative $(\X ,\tau)$ of $\alpha$ such
that the BUP holds for $((\X ,\tau); \Rr^n)$ ?
\begin{ex}
Let $m>1$.  Consider the connected manifold $\X =S^1\times S^{m-1}$
and the two involutions $\tau_1 = 1\times A$ and $\tau_2=A\times 1$ on
$\X$.
Both $(\X ,\tau_1)$, the boundary of $D^2\times S^{m-1}$,
and $(\X ,\tau_2)$, the boundary of $S^1\times D^m$, represent
$0$ in $\NN_m(\Zz_2)$.
The BUP holds for $((\X ,\tau_1); \Rr^n)$ for $1\leq n<m$,
but for $1<n\leq m$ the BUP does not hold for $((\X ,\tau_2); \Rr^n)$. 
\end{ex}

This example shows that,
given $\alpha$ and an integer $n$, it is not true, in general,
that if for one representative  $(\X_1,\tau_1)$ of $\alpha$ the BUP holds  
for $((\X_1,\tau_1); \Rr^n)$ then
it also holds for every other representative $((\X_2,\tau_2); \Rr^n)$.
For each $m$-dimensional free $\Zz_2$-bordism class $\alpha$ and 
integer $n$, one would, therefore, like
to  decide whether
the BUP with respect to $\Rr^n$ holds  
(a) for all of the representatives $(\X ,\tau)$ of $\alpha$,
or (b) for some, but not all, of the representatives, or (c) for none of them.

Our main results are as follows.
\begin{thm} 
Let  $\alpha \in \NN_m(\Zz_2)$, where $m>1$, be written as
$\alpha =a_0p_m+ a_1p_{m-1}+\ldots +a_{m-1}p_1+a_mp_0$ 
where $a_i\in \NN_{i}$. Suppose that $1<n<m$.
Then:

\par\noindent
\hbox to\parindent{\rm (i)\hfil}
There is a representative $(\X ,\tau )$ of $\alpha$, with $\X$ connected,
such that the BUP holds for $((\X ,\tau );\Rr^n)$.

\par\noindent
\hbox to\parindent{\rm (ii)\hfil}
If $a_k\not=0$ for some $k\leq m-n$, then for every representative
$(\X ,\tau )$ of $\alpha$ the BUP holds for $((\X ,\tau ); \Rr^n)$.

\par\noindent
\hbox to\parindent{\rm (iii)\hfil}
If $a_k=0$ for all $k\leq m-n$, then there is some representative
$(\X ,\tau )$ of 
$\alpha$, with $\X$ connected, such that the BUP does not hold
for $((\X ,\tau ); \Rr^n)$.
\end{thm}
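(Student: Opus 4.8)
The plan is to reduce everything to the bundle-theoretic reformulation of the Borsuk--Ulam property recorded in Section~2: for a pair $(\X,\tau)$ with associated line bundle $\lambda$ over $M=\X/\tau$ and classifying map $f\colon M\to BO(1)=\mathbb{RP}^{\infty}$, the Borsuk--Ulam property \emph{fails} for $((\X,\tau);\Rr^{n})$ precisely when the bundle $n\lambda=\lambda\oplus\cdots\oplus\lambda$ ($n$ summands) admits a nowhere-zero section, equivalently precisely when $f$ is homotopic to a map with image in $\mathbb{RP}^{n-1}$. (Indeed, writing $\lambda_{\infty}$ for the universal line bundle over $BO(1)$, a nowhere-zero section of $n\lambda=f^{*}(n\lambda_{\infty})$ is the same datum as a lift of $f$ along the sphere bundle $S(n\lambda_{\infty})=S^{\infty}\times_{\Zz_2}S^{n-1}$, and the latter is, over $\mathbb{RP}^{\infty}$, homotopy equivalent to the inclusion $\mathbb{RP}^{n-1}\hookrightarrow\mathbb{RP}^{\infty}$.) From this I would first extract the bordism criterion: \emph{$\alpha$ has a representative for which the Borsuk--Ulam property fails with respect to $\Rr^{n}$ if and only if $\alpha$ lies in the image of the homomorphism $\iota_{*}\colon\NN_{m}(\mathbb{RP}^{n-1})\to\NN_{m}(\mathbb{RP}^{\infty})=\NN_{m}(\Zz_2)$ induced by the inclusion.} One implication is clear, since a compression of $f$ into $\mathbb{RP}^{n-1}$ exhibits $[M,f]$ as $\iota_{*}[M,f']$; for the converse, any class $\iota_{*}[M',g']$ is represented by the pair whose line bundle is $(g')^{*}$ of the tautological bundle over $\mathbb{RP}^{n-1}$, and $n$ copies of that bundle carry a nowhere-zero section simply because $\dim\mathbb{RP}^{n-1}=n-1<n$. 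Finally I would use that $\NN_{*}(\mathbb{RP}^{n-1})$ is $\NN_{*}$-free on the classes of $\mathbb{RP}^{0},\dots,\mathbb{RP}^{n-1}$, which are sent by $\iota_{*}$ to $p_{0},\dots,p_{n-1}$; hence $\operatorname{im}\iota_{*}$ is the free $\NN_{*}$-submodule generated by $p_{0},\dots,p_{n-1}$, which in the normal form $\alpha=a_{0}p_{m}+\cdots+a_{m}p_{0}$ is exactly the set of classes with $a_{0}=a_{1}=\cdots=a_{m-n}=0$.

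Granting this, (ii) and (iii) are formal. If $a_{k}\neq 0$ for some $k\leq m-n$, then $\alpha\notin\operatorname{im}\iota_{*}$, so no representative of $\alpha$ fails the Borsuk--Ulam property with respect to $\Rr^{n}$, which is (ii). If $a_{k}=0$ for all $k\leq m-n$, then $\alpha=\iota_{*}[M',g']$; after homotoping $g'$ so that it is constant near a finite collection of small disks in $M'$ and then forming equivariant connected sums of the components of the double cover along the corresponding orbits of disks (an operation that does not change the $\Zz_2$-bordism class, being equivariant surgery on $0$-spheres), one obtains a representative $(\X,\tau)$ of $\alpha$ with $\X$ connected whose classifying map still has image in $\mathbb{RP}^{n-1}$; hence the Borsuk--Ulam property fails for $((\X,\tau);\Rr^{n})$, which is (iii).

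For (i), I would start from any connected representative $(X_{0},\tau_{0})$ of $\alpha$ --- one exists by part~(ii) of Musin's theorem --- and form its equivariant connected sum with $(S^{n}\times S^{m-n},A\times\mathrm{id})$ along an orbit of disks. Since $(S^{n}\times S^{m-n},A\times\mathrm{id})$ bounds $(S^{n}\times D^{m-n+1},A\times\mathrm{id})$ it represents $0$ in $\NN_{m}(\Zz_2)$, so the connected sum again represents $\alpha$; it is connected because $n\geq 2$ and $m-n\geq 1$. On the orbit space $M=M_{0}\,\#\,(\mathbb{RP}^{n}\times S^{m-n})$ the resulting line bundle $\lambda$ restricts to $\lambda_{0}$ over the $M_{0}$-summand and to the pull-back along $\mathrm{pr}_{1}\colon\mathbb{RP}^{n}\times S^{m-n}\to\mathbb{RP}^{n}$ of the tautological bundle over the other summand (the two bundles agree on the connecting tube, which is simply connected since $m\geq 3$). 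Restricting $w_{1}(\lambda)^{n}$ to the subspace $(\mathbb{RP}^{n}\times S^{m-n})\setminus(\text{open disk})$ of $M$ gives $x^{n}$, where $x\in H^{1}(\mathbb{RP}^{n};\Zz_2)$ is the generator, and $x^{n}$ is non-zero there because $n\leq m-1=\dim(\mathbb{RP}^{n}\times S^{m-n})-1$. Hence $w_{1}(\lambda)^{n}=w_{n}(n\lambda)\neq 0$, so $n\lambda$ has no nowhere-zero section and the Borsuk--Ulam property holds for $((\X,\tau);\Rr^{n})$, which proves (i).

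The main obstacle, as I see it, is the bordism criterion of the first paragraph --- converting ``$\alpha$ has a representative failing the Borsuk--Ulam property'' into membership in $\operatorname{im}\iota_{*}$, and then into the vanishing of $a_{0},\dots,a_{m-n}$ --- since this rests both on the $\NN_{*}$-freeness of $\NN_{*}(\mathbb{RP}^{n-1})$ with the expected generators and on the identification of nowhere-zero sections of $n\lambda$ with compressions of the classifying map. Once that is in hand, (ii) and (iii) drop out immediately and (i) is a short explicit construction; the only remaining computation, the behaviour of $w_{1}(\lambda)^{n}$ under restriction to a punctured summand in (i), is routine for $n<m-1$ and in the boundary case $n=m-1$ requires only the long exact cohomology sequence of the pair $(\mathbb{RP}^{n}\times S^{m-n},\,\mathbb{RP}^{n}\times S^{m-n}\setminus\{\mathrm{pt}\})$.
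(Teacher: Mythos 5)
Your overall strategy for parts (ii) and (iii) is genuinely different from the paper's and, where it works, cleaner. The paper proves these via the cobordism Euler class $e_n(M,f)\in\NN_{m-n}(BO(1))$ together with Proposition 4.3, whose hard direction (``$e_n(M,f)=0$ implies a cobordant representative with a nowhere-zero section'') is established by an explicit surgery along the zero-set $Z$. You instead work directly with the compression criterion: a nowhere-zero section of $n\lambda=f^*(n\lambda_\infty)$ is the same thing as a homotopy of $f$ into $\Rr P^{n-1}$, and then the bordism question becomes whether $\alpha\in\operatorname{im}\bigl(\iota_*\colon\NN_m(\Rr P^{n-1})\to\NN_m(\Rr P^\infty)\bigr)$, which the Conner--Floyd calculation identifies with $a_0=\cdots=a_{m-n}=0$. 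This is essentially the content of the paper's Corollary 4.5 and the exact sequence in Lemma 4.4, but you obtain it without the surgery in Proposition 4.3 (the converse is handled by the elementary fact that a rank-$n$ bundle over an $(n-1)$-complex has a nowhere-zero section), which is a real simplification. Your proof of (i) is also correct, and uses a slightly different null-bordant piece ($\Rr P^n\times S^{m-n}$) than the paper's Proposition 3.1 (which uses $\Rr P^m\sqcup\Rr P^m$), but it achieves the same $w_1(\lambda)^n\neq0$ conclusion.

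There is, however, a genuine gap in your proof of (iii). Having written $\alpha=\iota_*[M',g']$, you propose to make the double cover connected by ``forming equivariant connected sums of the components of the double cover along the corresponding orbits of disks.'' But a $\Zz_2$-equivariant $0$-surgery (cutting out $\Zz_2\times S^0\times D^m$ and gluing in $\Zz_2\times D^1\times S^{m-1}$) is exactly the connected sum of orbit spaces, and it \emph{preserves the two-sheeted structure}: if two pieces $\X_i=S(\lambda'|_{M'_i})$ each split as a pair of components interchanged by $\tau$ (i.e.\ $\lambda'|_{M'_i}$ is trivial), then the equivariant connected sum is again two swapped components. In particular, for $\alpha=0$ one may have picked $M'=S^m$ with $g'$ constant, so $\X=S^m\times\{\pm1\}$, and no amount of such surgery produces a connected $\X$. (The more aggressive surgery that would join the two sheets of a single trivial piece corresponds on the orbit space to connected sum with $\Rr P^m$ and is \emph{not} a free $\Zz_2$-cobordism --- it changes the class by $p_m$.) The fix is precisely the content of the paper's Proposition 3.3: first take disjoint union with the null-bordant pair $(S^{m-1}\times\Rr P^1,\ \mathrm{pr}_2)$, whose classifying map factors through $\Rr P^1\subseteq\Rr P^{n-1}$ (this uses $n>1$) and whose line bundle is non-trivial so that its double cover $S^{m-1}\times S^1$ is connected; then your connected-sum argument does produce a connected $\X$ with classifying map still landing in $\Rr P^{n-1}$.
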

\begin{thm} 
Consider a pair $(\X ,\tau )$ and corresponding real line bundle
$\lambda$ over $M=\X /\tau$. Write
$[\X , \tau ]=a_0p_m+\ldots +a_mp_0$. Then, for $n\leq m$,
we have $a_k=0$ for all $k\leq m-n$ if and only if 
$$ 
w_{j_1}(M) \cdots w_{j_s}(M)w_1(\lambda)^{m-\Sigma j_i}[M]=0 
$$
for all  $j_1, \cdots ,j_s\geq 1$, $s\geq 0$, with
$\Sigma_{i=1}^{s} j_i\leq m-n$.
\end{thm}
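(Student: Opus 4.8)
The plan is to reinterpret the condition ``$a_k=0$ for all $k\le m-n$'' as the vanishing of a single derived bordism class, and then to identify the characteristic numbers of that class with precisely the ones appearing in the statement. It is convenient to replace a pair $(\X,\tau)$ by its orbit datum $(M,f)$, where $f\colon M\to B\O(1)=\Rr P^\infty$ classifies $\lambda$; this is the standard isomorphism $\NN_m(\Zz_2)\cong\NN_m(\Rr P^\infty)$, under which $p_j=[S^j,A]$ goes to the class of the standard inclusion $\Rr P^j\hookrightarrow\Rr P^\infty$, and these classes form an $\NN_*$-basis of $\NN_*(\Rr P^\infty)$. On this group I would use the homomorphism $\partial\colon\NN_q(\Rr P^\infty)\to\NN_{q-1}(\Rr P^\infty)$ (the Smith homomorphism, a geometric version of multiplication by $w_1$) sending $(M,f)$ to $(N,f|_N)$, where $N\subset M$ is a smooth submanifold Poincar\'e dual to $w_1(\lambda)$: after factoring $f$ through some $\Rr P^L$ and making it transverse to $\Rr P^{L-1}$, take $N=f^{-1}(\Rr P^{L-1})$. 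It is $\NN_*$-linear, and on generators $\partial p_j=p_{j-1}$ for $j\ge1$ while $\partial p_0=0$; hence, writing $[\X,\tau]=a_0p_m+\cdots+a_mp_0$,
$$\partial^{\,n}[\X,\tau]=\sum_{i=0}^{m-n}a_i\,p_{m-n-i}\in\NN_{m-n}(\Rr P^\infty).$$
By freeness of $\NN_*(\Rr P^\infty)$ this class is zero precisely when $a_k=0$ for every $k\le m-n$, so the left-hand condition of the theorem is equivalent to $\partial^{\,n}[\X,\tau]=0$.

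The remaining task is to see that $\partial^{\,n}[\X,\tau]=0$ is equivalent to the vanishing of exactly the characteristic numbers in the statement. I would use that $\NN_*(\Rr P^\infty)$ is detected by its characteristic numbers -- a class $[N',g]$ is zero iff all numbers $w_{J}(N')\,g^*(w_1)^{\ell}[N']$ (with $|J|+\ell=\dim N'$) vanish -- which follows from Thom's splitting of $MO$ into Eilenberg--MacLane spectra together with $H^*(\Rr P^\infty;\Zz_2)=\Zz_2[w_1]$. So $\partial^{\,n}[\X,\tau]=0$ iff all characteristic numbers of $\partial^{\,n}[\X,\tau]$ vanish, and these are compared with the characteristic numbers of $(M,f)$ one $\partial$ at a time. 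For $(N,g)=\partial(M,f)$ the normal bundle of $N$ in $M$ is $\lambda|_N$, so $g^*w_1=w_1(\lambda)|_N=:c_N$ and $w(N)=w(M)|_N(1+c_N)^{-1}$, and Poincar\'e--Lefschetz duality gives $\langle z|_N,[N]\rangle=\langle z\,w_1(\lambda),[M]\rangle$ for $z\in H^*(M;\Zz_2)$. Expanding $w_J(N)$ through $w(N)=w(M)|_N(1+c_N)^{-1}$ shows that each number $w_J(N)c_N^{\ell}[N]$ is a $\Zz_2$-sum of numbers $w_{J''}(M)\,w_1(\lambda)^{k}[M]$ with $k\ge\ell+1$; dually, expanding $w_J(M)|_N$ through $w(M)|_N=w(N)(1+c_N)$ shows that each $w_J(M)\,w_1(\lambda)^k[M]$ with $k\ge1$ is a $\Zz_2$-sum of numbers $w_{J''}(N)c_N^{\ell}[N]$ with $\ell\ge k-1$. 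Consequently, for each $r\ge0$, all characteristic numbers of $(M,f)$ in which $w_1(\lambda)$ occurs to a power $\ge r+1$ vanish if and only if all characteristic numbers of $(N,g)$ in which $c_N$ occurs to a power $\ge r$ vanish. Applying this to $(M,f),\partial(M,f),\dots,\partial^{\,n-1}(M,f)$ in turn, the characteristic numbers of $\partial^{\,n}[\X,\tau]$ all vanish iff all characteristic numbers of $(M,f)$ with $w_1(\lambda)$-exponent $\ge n$ vanish; since $w_1(\lambda)$-exponent $\ge n$ is the same as $\Sigma j_i\le m-n$, this is exactly the displayed condition of the theorem. Combined with the previous paragraph, this proves the statement.

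Everything except the last paragraph is standard (the orbit-space isomorphism, the values of $\partial$ on the $p_j$, freeness of $\NN_*(\Rr P^\infty)$, and detection by characteristic numbers). The one genuine computation -- and the step where I expect the work to lie -- is combining the Whitney relation $w(N)=w(M)|_N(1+c_N)^{-1}$ with the duality identity to verify, in both directions and uniformly in the exponent $r$, that the characteristic numbers of $M$ carrying a high power of $w_1(\lambda)$ and those of $N=\partial M$ pick out the same vanishing locus.
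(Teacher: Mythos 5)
Your proof is correct, and it follows the paper's overall strategy but packages the key computation differently. The paper introduces an obstruction class $e_n(M,f)\in\NN_{m-n}(B\O(1))$, geometrically the zero-set of a generic section of $n\lambda$, proves it is $\NN_*$-linear with $e_n(p_j)=p_{j-n}$ (Lemma 4.4) -- so $a_k=0$ for $k\le m-n$ iff $e_n(M,f)=0$ -- and then, in Theorem 5.1, compares the \SW numbers of $M$ and of the codimension-$n$ zero-set $Z$ in a single congruence built from $w(TM-n\lambda)$ and $e(n\lambda)=w_1(\lambda)^n$. You reach the same obstruction class, but as the $n$-fold iterate $\partial^{\,n}$ of the Smith homomorphism (zero-set of a generic section of a single copy of $\lambda$); these coincide since both are dual to multiplication by $e_{\NN}(H)^n$. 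The real divergence is in the characteristic-number step: rather than the paper's one-shot congruence for the codimension-$n$ submanifold, you prove a clean one-step lemma relating the $w_1(\lambda)$-filtration of the \SW numbers of $(M,f)$ to that of $\partial(M,f)$ via the Whitney relation $w(N)=w(M)|_N(1+c_N)^{-1}$ and the duality $\langle z|_N,[N]\rangle=\langle z\,w_1(\lambda),[M]\rangle$, and then chain it $n$ times. This iterative version makes the triangularity (upper-triangular change of basis in the $w_1(\lambda)$-exponent, with $1$'s on the diagonal) more transparent and replaces the manipulation of $w(TM-n\lambda)$ with the elementary $w(TM-\lambda)$, at the modest cost of carrying an induction. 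Both arguments rely on the same ingredients: the $\NN_*$-basis $\{p_i\}$ of $\NN_*(B\O(1))$, detection of bordism classes in $\NN_*(B\O(1))$ by tangential-plus-pullback \SW numbers, and the normal-bundle identification of the Poincar\'e-dual submanifold.
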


\begin{cor} 
If $a_k\not=0$ for some $k\leq m-n$, then $w_1(\lambda )^n\not=0$.
\end{cor}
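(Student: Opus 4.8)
The plan is to prove the contrapositive: assuming $w_1(\lambda)^n=0$, I would show that $a_k=0$ for all $k\le m-n$, and the Corollary then follows at once. By the preceding Theorem, the condition $a_k=0$ for all $k\le m-n$ is equivalent to the vanishing of every characteristic number
$$
w_{j_1}(M)\cdots w_{j_s}(M)\,w_1(\lambda)^{m-\Sigma_{i=1}^{s} j_i}[M]
$$
with $j_1,\ldots,j_s\ge 1$, $s\ge 0$, and $\Sigma_{i=1}^{s} j_i\le m-n$. So it suffices to check that each such number is automatically zero under the hypothesis $w_1(\lambda)^n=0$.

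The key (and only) step is the following elementary observation. If $\Sigma_{i=1}^{s} j_i\le m-n$, then the exponent $m-\Sigma_{i=1}^{s} j_i$ is at least $n$, so in $H^*(M;\Zz_2)$ we may factor
$$
w_1(\lambda)^{m-\Sigma_{i=1}^{s} j_i}=w_1(\lambda)^{n}\cdot w_1(\lambda)^{m-n-\Sigma_{i=1}^{s} j_i},
$$
and the first factor vanishes by assumption. Hence the whole cohomology class $w_{j_1}(M)\cdots w_{j_s}(M)\,w_1(\lambda)^{m-\Sigma_{i=1}^{s} j_i}$ is zero, and a fortiori its evaluation on $[M]$ is zero. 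Applying the Theorem, $a_k=0$ for all $k\le m-n$; equivalently, if some $a_k\ne 0$ with $k\le m-n$, then $w_1(\lambda)^n\ne 0$.

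I do not expect any genuine obstacle here: the substantive content is contained in the Theorem, and the Corollary is a purely formal consequence — every monomial appearing in the criterion carries a power of $w_1(\lambda)$ of exponent at least $n$, so a single vanishing, $w_1(\lambda)^n=0$, annihilates the entire list.
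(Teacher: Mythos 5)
Your proof is correct and is essentially the paper's own argument: the paper proves the contrapositive by noting (via Theorem 1.5, equivalently the vanishing of $e_n(M,f)$) that every characteristic number in the criterion carries a factor $w_1(\lambda)^{m-\Sigma j_i}$ with exponent at least $n$, hence vanishes once $w_1(\lambda)^n=0$. The only cosmetic difference is that the paper phrases this through the intermediate quantity $e_n(M,f)$, whereas you invoke the characteristic-number reformulation of Theorem 1.5 directly.
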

We remark that the stronger problem  of  classifying all triples 
$((\X , \tau);  \Rr^n)$,  where
$(\X , \tau)$ runs over the set of all representatives in a given 
$\Zz_2$-bordism
class for which the BUP holds is much harder and closely
related to the problem of finding the index
(as defined above) of a pair  $(\X ,\tau)$.

The following more refined classification question is open and certainly 
far more
difficult than the classification question which has been considered in the 
present work.
More precisely,  let  $\alpha \in \NN_m(\Zz_2)$ such that $\alpha$ is 
of the form
$a_0p_n+ a_1p_{n-1}+\ldots +a_{n-1}p_1+a_kp_0$ 
where $a_i\in \NN_{n-i}$ and $a_0\ne 0$. 
Suppose  $n<m$ (otherwise the main question  is known). 
Given $\alpha\in \NN_m(\Zz_2)$ first enumerate the classes of 
equivalent classes of pairs in the sense of \cite[Corollary 2.3]{GHZ} 
and  a main question is to
determine  the involution classes  $[\X , \tau ]$ for which the triples
 $((\X , \tau); \Rr^n) $  satisfy the BUP as  $(\X , \tau)$ 
runs over the  elements of $\alpha$.
In particular find such a classification for   
$\alpha\in \NN_m(BO (1))$ of the form $ap_n$ where $a\in \NN_{m-n}$.
\section{Preliminaries}
We begin by  reviewing some basic facts about free $\Zz_2$-bordism.
\begin{defn} \label{Definition2.1} 
Two free involution pairs $(\X ,\tau )$ and $({X'}^m,\tau')$ 
are cobordant if there
exists a smooth compact $(m+1)$-dimensional manifold $W^{m+1}$, equipped with a free smooth involution 
$\upsilon :W^{m+1} \longrightarrow W^{m+1}$ such that the boundary of $W^{m+1}$ 
is the disjoint union $\X \cup {X'}^m$, $\upsilon |_{\X } = \tau$ and 
$\upsilon |_{{X'}^m}=\tau'$. 
This gives rise to a set of cobordism classes which we denote by $\NN_m(\Zz_2)$.
\end{defn}
Recall that associated with a smooth free involution $\tau :\X \to \X$
we have a closed smooth $m$-dimensional manifold $M=\X /\tau$ and
an orthogonal real line bundle $\lambda$ over $M$ classified by a map
$f: M\to BO (1)$.
The total space of $\lambda$ is
$(\X \times  \Rr )/\sim$, where $\sim$ identifies $(x,t)$ to $(\tau (x),-t)$.
Conversely, from any orthogonal real line bundle $\lambda$ over a closed smooth
$m$-dimensional manifold $M$ one obtains an associated 
smooth free involution by taking $\X$ to be the total space
of the unit sphere bundle $S(\lambda)  \to M$, 
equipped with the fibrewise involution
$\tau =-1 : S(\lambda) \to S(\lambda)$ that
interchanges the points of each fibre. 
From \cite[20-4; p.~71]{CF} we have:
\begin{prop} \label{Theorem2.2} 
The correspondence $[\X , \tau ] \mapsto [M, f]$
described above defines an isomorphism of $\NN_*$-modules between 
$\NN_*(\Zz_2)$ and $\NN_*(BO (1))$.
\end{prop}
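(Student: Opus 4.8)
The plan is to produce an explicit inverse to the correspondence $[\X,\tau]\mapsto[M,f]$ and to check that both maps are bordism-invariant group homomorphisms commuting with the $\NN_*$-action; granting that, the fact that they are mutually inverse is essentially formal. Write $\Phi\colon\NN_*(\Zz_2)\to\NN_*(BO(1))$ for the assignment of the statement, and define $\Psi\colon\NN_*(BO(1))\to\NN_*(\Zz_2)$ by sending $[M,f]$, where $f$ classifies the orthogonal line bundle $\lambda$ over $M$, to $[S(\lambda),-1]$ with $S(\lambda)\to M$ the smooth unit sphere bundle and $-1$ its fibrewise antipodal involution. On the level of manifolds both constructions are legitimate: the quotient of a closed manifold by a free smooth involution is again a closed smooth manifold, with the double cover classified by a smooth $\lambda$; and $S(\lambda)$ is a closed smooth $m$-manifold with a free smooth involution.

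First I would check that $\Phi$ is well defined. Given a free $\Zz_2$-cobordism $(W^{m+1},\upsilon)$ from $(\X,\tau)$ to $({X'}^m,\tau')$, the quotient $N=W^{m+1}/\upsilon$ is a compact smooth $(m+1)$-manifold with $\partial N=M\sqcup M'$, the double cover $W^{m+1}\to N$ restricts to $\X\to M$ and ${X'}^m\to M'$, and a classifying map $g\colon N\to BO(1)$ for the associated line bundle restricts to classifying maps for $\lambda$ over $M$ and $\lambda'$ over $M'$ up to homotopy; since homotopic maps on a fixed manifold are bordant via the cylinder, $(N,g)$ exhibits $[M,f]=[M',f']$ and simultaneously shows independence of the choice of classifying map. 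The same pattern handles $\Psi$: over a cobordism $(N,g)$ in $BO(1)$ with classified line bundle $\mu$, the sphere bundle $S(\mu)\to N$ with its fibrewise involution is a free $\Zz_2$-cobordism whose boundary is $S(\lambda)\sqcup S(\lambda')$, because $g$ restricts to the classifying maps of $\lambda,\lambda'$; independence of $g$ follows by applying the construction to a homotopy. Additivity under disjoint union is immediate, so $\Phi$ and $\Psi$ are homomorphisms, and both commute with multiplication by a closed manifold $P$ (product with $P$ passes through the quotient and through the sphere-bundle construction), so they are $\NN_*$-linear.

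Finally I would verify $\Psi\circ\Phi=\mathrm{id}$ and $\Phi\circ\Psi=\mathrm{id}$ at the level of manifolds, which suffices. For $(\X,\tau)$, the line bundle $\lambda$ over $M=\X/\tau$ has total space $(\X\times\Rr)/\!\sim$ with $(x,t)\sim(\tau x,-t)$, so its unit sphere bundle is $(\X\times\{\pm1\})/\!\sim$, and $x\mapsto[(x,1)]$ is a diffeomorphism $\X\to S(\lambda)$ taking $\tau$ to $[(x,1)]\mapsto[(x,-1)]=[(\tau x,1)]$; hence $\Psi(\Phi[\X,\tau])=[\X,\tau]$. For $(M,f)$, the quotient of $S(\lambda)$ by its fibrewise involution is canonically $M$, and the double cover $S(\lambda)\to M$ is exactly the orientation double cover of $\lambda$, whose associated line bundle is $\lambda$ again (orthogonal line bundles being classified by $w_1$); hence $\Phi(\Psi[M,f])=[M,f]$. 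The only delicate point in the whole argument is the smoothness and classifying-map bookkeeping flagged above --- that quotients and sphere bundles can be taken smooth, and that classifying maps into $BO(1)$ together with the needed homotopies can be chosen compatibly over cobordisms; the underlying topology leaves no choice. Alternatively one may simply invoke \cite[20-4; p.~71]{CF}.
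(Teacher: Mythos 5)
Your proof is correct, but it takes a different route from the paper: the paper offers no argument at all for this proposition, simply citing Conner--Floyd \cite[20-4; p.~71]{CF}, whereas you give a self-contained verification. Your argument is the standard one and all the key points are in order: the inverse $\Psi$ via the unit sphere bundle with its fibrewise antipodal involution, well-definedness of both maps by applying the quotient and sphere-bundle constructions to cobordisms (with the homotopy-invariance of classifying maps absorbed by cylinder bordisms), $\NN_*$-linearity via products with closed manifolds, and the two identifications $\X\cong S(\lambda)$ (equivariantly, via $x\mapsto[(x,1)]$ in $(\X\times\{\pm1\})/\!\sim$) and $\lambda\cong(S(\lambda)\times\Rr)/\!\sim$ giving $\Psi\circ\Phi=\mathrm{id}$ and $\Phi\circ\Psi=\mathrm{id}$. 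What your version buys is transparency: the explicit inverse $\Psi$ is exactly the dictionary the paper uses repeatedly in later sections (e.g.\ in translating statements about $n\lambda$ back into the Borsuk--Ulam property for $(\X,\tau)$), so having it spelled out, together with the check that the two constructions really are mutually inverse on the nose at the level of manifolds, makes the rest of the paper's back-and-forth fully justified; the cost is length where a literature citation suffices, as you yourself note in your final sentence.
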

Using this correspondence, we shall work primarily with a
pair $(M,f)$ (and the associated real line bundle $\lambda$)
representing a class in $\NN_m(BO (1))$
and then translate results into statements about the free $\Zz_2$-manifold
$(\X ,\tau )$ representing a class in $\NN_m(\Zz_2)$.
The Hopf line bundle over $BO (1)=\Rr P^\infty$ will be denoted by
$H$. Thus the line bundle $\lambda$ is isomorphic to $f^*H$.

We begin with the Borsuk-Ulam property for $((\X ,\tau );\,\Rr^n)$.
\begin{lem} \label{Lemma2.3}  
Consider a line bundle $\lambda$ over a closed $m$-manifold $M$.
Then there is
a map $\phi: S(\lambda) \longrightarrow \Rr^n$ such that 
$\phi(-v) \neq \phi(v)$ for all $ v \in S(\lambda)$
if and only if the
bundle $n\lambda =\Rr^n\otimes\lambda$ over $M$ has a nowhere zero section. 
\end{lem}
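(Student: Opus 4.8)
The plan is to reformulate the stated condition on $\phi$ as an equivariant lifting problem over $M$ and then to recognize the resulting object as a section of $n\lambda$. Throughout, let $\Zz_2=\{\pm 1\}$ act on $S(\lambda)$ by the fibrewise antipodal map $v\mapsto -v$ (so the quotient is $M$) and on $\Rr^n$ by scalar multiplication.

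First I would reduce the problem to one about equivariant maps. Given a continuous $\phi\colon S(\lambda)\to\Rr^n$ with $\phi(v)\ne\phi(-v)$ for all $v$, the map $\psi(v)=\phi(v)-\phi(-v)$ is continuous, $\Zz_2$-equivariant, and nowhere zero. Conversely, if $\psi\colon S(\lambda)\to\Rr^n$ is continuous, $\Zz_2$-equivariant and nowhere zero, then $\psi(v)-\psi(-v)=2\psi(v)\ne 0$, so $\phi=\psi$ already has the required property. Hence the existence of $\phi$ as in the statement is equivalent to the existence of a nowhere-zero $\Zz_2$-equivariant map $S(\lambda)\to\Rr^n$.

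Second I would identify such equivariant maps with sections of the associated bundle $E=S(\lambda)\times_{\Zz_2}\Rr^n\to M$: a $\Zz_2$-equivariant $\psi$ descends to the section $\{v,-v\}\mapsto[v,\psi(v)]$ (well defined since $[-v,\psi(-v)]=[-v,-\psi(v)]=[v,\psi(v)]$), and this correspondence is a bijection. The $\Zz_2$-representation on $\Rr^n$ is the $n$-fold sum of the sign representation, and the line bundle associated with the double cover $S(\lambda)\to M$ is $\lambda$ itself (immediate from the description of the total space of $\lambda$ as $(S(\lambda)\times\Rr)/\!\sim$ recalled before Proposition~\ref{Theorem2.2}, via $[v,t]\mapsto tv$). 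Therefore $E$ is canonically $\lambda\oplus\cdots\oplus\lambda=\Rr^n\otimes\lambda=n\lambda$, and under this identification nowhere-zero equivariant maps correspond exactly to nowhere-zero sections of $n\lambda$. Combining the two steps yields the asserted equivalence.

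The argument is essentially formal, so I do not expect a genuine obstacle; the only point deserving care is the identification of the bundle $E$ with $n\lambda$ (rather than with some twist), which is why I would spell out the isomorphism $S(\lambda)\times_{\Zz_2}\Rr\cong\lambda$ explicitly. The real content of the lemma is precisely this translation between the Borsuk-Ulam-type condition and a vector-bundle section statement, and the proof should make that translation transparent.
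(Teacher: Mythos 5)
Your proof is correct and takes essentially the same route as the paper: the key step of passing from $\phi$ to the antisymmetrized map $\psi(v)=\phi(v)-\phi(-v)$, and then viewing equivariant nowhere-zero maps $S(\lambda)\to\Rr^n$ as nowhere-zero sections of $n\lambda$, is exactly what the paper does (with the explicit formula $s(x)=(\phi(v)-\phi(-v))\otimes v$ in place of your associated-bundle detour). You simply make the associated-bundle isomorphism $S(\lambda)\times_{\Zz_2}\Rr^n\cong n\lambda$ more explicit, which is a matter of exposition rather than a different argument.
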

\begin{proof}
Given such a map $\phi$, we can write down a nowhere zero section
mapping $x \in M$ to
$(\phi(v)-\phi(-v))\otimes v\in \Rr^n\otimes\lambda_x$ for $v\in S(\lambda_x)$.
Conversely,
a nowhere zero section $s$ can be expressed as $s(x)=\phi (v)\otimes v$,
$v\in S(\lambda_x)$, with $\phi (-v)=-\phi (v)\not=0$.
\end{proof}
The case in which $n$ is equal to the dimension $m$ of $M$ is 
completely settled by cohomology.
\begin{lem} \label{Lemma2.4} 
Let $M$ be a closed smooth $m$-manifold 
and let $\lambda$ be a real line 
bundle over $M$. Then $m\lambda=\Rr^m\otimes\lambda$
admits a nowhere zero section if and only if $w_1(\lambda)^m = 0$.
\end{lem}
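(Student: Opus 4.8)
The plan is to translate the problem into a question about the Euler class of the bundle $m\lambda$ over the closed $m$-manifold $M$. A real vector bundle of rank $m$ over an $m$-dimensional base admits a nowhere zero section if and only if its Euler class in $H^m(M;\Zz_2)$ vanishes; since $M$ is a closed $m$-manifold (not necessarily orientable) we work throughout with mod $2$ coefficients, and the $\Zz_2$-Euler class is the top \SW class. So the first step is to record that $m\lambda$ has a nowhere zero section if and only if $w_m(m\lambda)=0$ in $H^m(M;\Zz_2)$, equivalently $w_m(m\lambda)[M]=0$, since $H^m(M;\Zz_2)\cong\Zz_2$ is detected by evaluation on the fundamental class $[M]$.

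The second, and computational, step is to identify $w_m(m\lambda)$. Writing $w_1(\lambda)=x$, the bundle $m\lambda=\Rr^m\otimes\lambda$ is a sum of $m$ copies of the line bundle $\lambda$, so its total \SW class is $(1+x)^m$ and hence $w_m(m\lambda)=x^m$. Thus $m\lambda$ admits a nowhere zero section if and only if $w_1(\lambda)^m[M]=0$. The final step is to upgrade this from the characteristic number to the characteristic class itself: since $M$ is a closed $m$-manifold, Poincar\'e duality with $\Zz_2$-coefficients gives that the pairing $H^m(M;\Zz_2)\times H^0(M;\Zz_2)\to\Zz_2$ is nondegenerate, so an element of $H^m(M;\Zz_2)$ is zero precisely when it evaluates to zero on $[M]$; applied to $w_1(\lambda)^m$ this yields $w_1(\lambda)^m=0$ in $H^m(M;\Zz_2)$ if and only if $w_1(\lambda)^m[M]=0$, completing the chain of equivalences.

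I do not expect any serious obstacle here: the only point requiring a little care is that $M$ may be disconnected, in which case $H^m(M;\Zz_2)$ has rank equal to the number of components and one must apply the nondegeneracy of the Poincar\'e pairing on each component separately — the statement $w_1(\lambda)^m=0$ then means the restriction to every component vanishes, which is still equivalent to the existence of a nowhere zero section of $m\lambda$ over each component, hence over all of $M$. One should also note explicitly that the ``nowhere zero section of a rank-$m$ bundle over an $m$-complex'' criterion is the standard primary-obstruction statement, the obstruction being exactly $w_m$ because we are in the top dimension and the coefficient group $\pi_{m-1}(S^{m-1})\cong\Zz$ contributes its mod $2$ reduction to obstruction theory for the unoriented bundle.
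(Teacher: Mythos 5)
There is a genuine gap at the first step. You assert that a rank-$m$ real vector bundle $\xi$ over a closed $m$-manifold admits a nowhere zero section if and only if its mod~$2$ Euler class $w_m(\xi)\in H^m(M;\Zz/2\Zz)$ vanishes, and in your closing paragraph you justify this by saying that obstruction theory ``contributes its mod $2$ reduction'' for an unoriented bundle. That is not correct. The primary (and, in this top dimension, only) obstruction lives in $H^m(M;\Zz(\xi))$, the cohomology with integer coefficients twisted by the orientation local system of $\xi$; its mod~$2$ reduction is $w_m(\xi)$, but the reduction map need not be injective, so $w_m(\xi)=0$ does not in general force the obstruction to vanish. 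The tangent bundle of $S^2$ is a counterexample to your stated criterion: $w_2(TS^2)=\chi(S^2)\bmod 2=0$, yet $TS^2$ has no nowhere zero section.

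What rescues the lemma --- and this is the step your argument is missing --- is the special feature that $\lambda$ is a \emph{line} bundle, so $2e(\lambda)=0$ and hence the twisted integral Euler class $e(\xi)=e(\lambda)^m$ is $2$-torsion. The paper's proof exploits this: after reducing to $M$ connected, Poincar\'e duality shows $H^m(M;\Zz(\xi))$ is either $\Zz$ or $\Zz/2\Zz$. In the $\Zz$ case a $2$-torsion element must already be zero (so $e(\xi)=0$ and also $w_1(\lambda)^m=0$, its reduction); in the $\Zz/2\Zz$ case mod~$2$ reduction is an isomorphism carrying $e(\xi)$ to $w_1(\lambda)^m$. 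Either way $e(\xi)=0$ is equivalent to $w_1(\lambda)^m=0$. Your computation $w_m(m\lambda)=w_1(\lambda)^m$ and the Poincar\'e-duality remark relating a top class to its characteristic number are both fine; you just need to replace the false blanket claim about $w_m$ detecting sections with the $2$-torsion observation and the twisted-coefficient analysis sketched above.
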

\begin{proof}
We may assume that $M$ is connected.
The bundle $\xi = m\lambda$ admits a nowhere zero 
section if and only if the Euler class
$e(\xi) \in H^m(M; \Zz (\xi))$ in the cohomology group of $M$ with integer 
coefficients twisted by the orientation bundle of $\xi$ is zero. But
$2e(\xi) =0$, since $e(\xi)= e(\lambda)^m$ and $ 2e(\lambda) = 0$. 
By Poincar\'{e} duality, $H^m(M; \Zz (\xi))$ is isomorphic
to either $\Zz$ or $\Zz /2\Zz.$  In the first case, we must have $e(\xi)=0$,
and then $w_1(\lambda )^m=0$, too. 
In the second case, reduction mod $2$:
$H^m(M; \Zz (\xi)) \longrightarrow H^m(M; \Zz / 2\Zz )$ is an isomorphism 
and  maps $e(\xi)$ to $w_1(\lambda)^m$.
\end{proof}
\begin{proof}[Proof of Theorem 1.1]
The theorem follows at once from Lemma \ref{Lemma2.3} and Lemma \ref{Lemma2.4}.
\end{proof}
The cases in which $n=1$ or $n=2$ are also easily resolved by
cohomology calculations.
\begin{lem} \label{dim2}
Let $\lambda$ be a real line bundle over a closed manifold $M$.
Consider the cases
{\rm (i)} $n=1$,  
{\rm (ii)} $n=2$.  Then $n\lambda$ admits a nowhere zero section
if and only if
{\rm (i)} $w_1(\lambda )=0$, 
{\rm (ii)} $w_1(\lambda )\in H^1(M;\,\Zz /2\Zz )$ is
the reduction mod $2$ of an integral class in $H^1(M;\,\Zz )$,
respectively. 
\end{lem}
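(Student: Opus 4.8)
The plan is to reduce, in each of the two cases, the existence of a nowhere zero section of $n\lambda$ to the vanishing of an Euler class and then evaluate that class cohomologically. Case $n=1$ is essentially formal: a real line bundle admits a nowhere zero section if and only if it is trivial (such a section spans the bundle and so gives a trivialisation; the converse is clear), and a line bundle over $M$ is trivial if and only if $w_1(\lambda)=0$, since line bundles over $M$ are classified up to isomorphism by their first \SW class in $H^1(M;\Zz/2\Zz)$. This settles (i).

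For case $n=2$, first observe that $2\lambda=\Rr^2\otimes\lambda$ is orientable, because $w_1(2\lambda)=2\,w_1(\lambda)=0$; we may assume $M$ is connected, as in the proof of Lemma~\ref{Lemma2.4}. Choosing a complex structure on the fibre $\Rr^2$ identifies $2\lambda$ with the complexification $\lambda_\Cc=\Cc\otimes_\Rr\lambda$, whose underlying oriented real $2$-plane bundle is $2\lambda$ with one of its two orientations. I would then argue that $2\lambda$ admits a nowhere zero section if and only if its Euler class in $H^2(M;\Zz)$ vanishes: either by obstruction theory (the only obstruction to a section of a rank-$2$ bundle is the Euler class, the higher obstruction groups $H^{k+1}(M;\pi_k(S^1))$ vanishing for $k\ge 2$), or by noting that a nowhere zero section forces a splitting $2\lambda\cong\Rr\oplus\mu$ with $w_1(\mu)=0$, hence $\mu$, and so $2\lambda$, trivial, while oriented real $2$-plane bundles are classified by their Euler class since $B\mathrm{SO}(2)=\Cc P^\infty=K(\Zz,2)$. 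Since the Euler class of $\lambda_\Cc$ is $c_1(\lambda_\Cc)$, the condition becomes $c_1(\lambda_\Cc)=0$.

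It remains to identify $c_1(\lambda_\Cc)$, and here I would invoke the standard formula $c_1(\lambda_\Cc)=\beta\bigl(w_1(\lambda)\bigr)$, where $\beta\colon H^1(M;\Zz/2\Zz)\to H^2(M;\Zz)$ is the Bockstein homomorphism associated with the coefficient sequence $0\to\Zz\xrightarrow{\ 2\ }\Zz\to\Zz/2\Zz\to 0$; by naturality this reduces to the universal case $M=\Rr P^\infty$, $\lambda=H$, where $c_1(H_\Cc)$ reduces mod $2$ to $w_1(H)^2\ne 0$ and so must be the nonzero element of $H^2(\Rr P^\infty;\Zz)=\Zz/2\Zz$, which is $\beta(w_1(H))$. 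Finally, exactness of
$$ H^1(M;\Zz)\xrightarrow{\ \rho\ }H^1(M;\Zz/2\Zz)\xrightarrow{\ \beta\ }H^2(M;\Zz), $$
with $\rho$ reduction mod $2$, shows that $\beta(w_1(\lambda))=0$ if and only if $w_1(\lambda)$ lies in the image of $\rho$, i.e.\ is the reduction mod $2$ of an integral class; this gives (ii). The crux of the argument is the middle step: recognising $2\lambda$ as $\lambda_\Cc$ and knowing that its first Chern class is $\beta(w_1(\lambda))$, which is what converts an obstruction-theoretic problem into the purely cohomological condition in the statement — once that identification is in hand, the Bockstein exact sequence finishes the proof with no further work.
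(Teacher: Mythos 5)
Your argument is correct and follows essentially the same route as the paper's: for $n=1$ you use the classification of real line bundles by $w_1$, and for $n=2$ you identify $2\lambda$ with $\mathbb{C}\otimes\lambda$, reduce the section question to the vanishing of $c_1(\mathbb{C}\otimes\lambda)$, establish the identity $c_1(\mathbb{C}\otimes\lambda)=\beta(w_1(\lambda))$ by checking the universal case, and finish with exactness of the Bockstein sequence. The only cosmetic difference is that you spell out the ``nowhere zero section iff Euler class vanishes'' step (via obstruction theory or the splitting argument) where the paper simply invokes the classification of complex line bundles by $c_1$.
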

\begin{proof}
(i).\ 
The real line bundle $\lambda$ is classified by $w_1(\lambda )\in H^1(M;\,
\Zz /2\Zz )$ and admits a nowhere zero section if and only if it is trivial. 

(ii).\ 
The complex line bundle $\Cc\otimes\lambda$, isomorphic to
$2\lambda$ as a real bundle, is similarly classified by 
the first Chern class $c_1(\Cc \otimes \lambda )\in H^2(M;\,\Zz )$
and admits a nowhere zero section if and only if it is trivial. 
Now $c_1(\Cc\otimes \lambda )=\beta w_1(\lambda )$,
where $\beta$ is the Bockstein homomorphism in the exact sequence:
$$
H^1(M;\,\Zz ) \xrightarrow{2} H^1(M;\,\Zz ) \to H^1(M;\, \Zz /2\Zz )
\xrightarrow{\beta} H^2(M;\,\Zz ),
$$
as follows from the observation that $\beta : H^1(BO (1);\, \Zz /2\Zz ) \to
H^2(BO (1);\, \Zz )$ is an isomorphism.
Hence $\beta w_1(\lambda )$ is zero if and only if $w_1(\lambda )$ is
integral.
\end{proof}
\section{Connectedness}
Let $\alpha\in \NN_m(BO (1))$ be a $\Zz_2$-bordism class. 
In this section we first show that, if
$m>1$, there is a representative $(M,f)$ of $\alpha$ such that the
associated $\Zz_2$-manifold $\X = S(\lambda )$ is connected.
For  $n<m$, there is a representative $(M , f)$ of $\alpha$, with
$\X$ connected,  for which the BUP holds for the triple 
$((\X, \tau); \Rr^n))$.
\begin{prop} \label{Proposition3.1} 
If $m > 1$, then every class in $\NN_m (BO (1))$ has a representative
$(M', f')$ with $M'$ connected and admitting an embedding 
$\Rr P^{m-1} \hookrightarrow M'$
such that $\lambda' =(f')^*H$ restricts to the Hopf bundle over $\Rr P^{m-1}$. 
It follows that $S(\lambda' )$ is connected and that,
if $n < m$, every section of
$n\lambda'$ has a zero {\rm (}lying in the submanifold 
$\Rr P^n\subseteq  \Rr P^{m-1}\subseteq M${\rm )}.
\end{prop}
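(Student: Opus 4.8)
The plan is to start from an arbitrary representative $(M,f)$ of $\alpha$ and to connect-sum it, compatibly with the line bundles, with one fixed null-bordant pair that already carries the desired copy of $\Rr P^{m-1}$ equipped with the Hopf bundle. Concretely, first I would fix the auxiliary pair: let $N=\Rr P^{m-1}\times S^1$, let $p\colon N\to\Rr P^{m-1}$ be the projection, and let $g\colon N\to BO (1)$ classify $p^*H$. Since $\Rr P^{m-1}\times S^1=\partial(\Rr P^{m-1}\times D^2)$ and $p^*H$ extends over $\Rr P^{m-1}\times D^2$ (pull $H$ back along the projection $\Rr P^{m-1}\times D^2\to\Rr P^{m-1}$), the pair $(N,g)$ bounds, so $[N,g]=0$ in $\NN_m(BO (1))$. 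At the same time, for any $t_0\in S^1$ the slice $\Rr P^{m-1}\times\{t_0\}$ is a closed submanifold of $N$ over which $p^*H$ restricts to the Hopf bundle $H$.

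Next I would carry out the connected sum. Write $M=M_1\sqcup\dots\sqcup M_k$ as the union of its components (taking $M'=N$ directly if $M$ is empty). Choose one small embedded disk in each $M_i$ and $k$ disjoint small embedded disks in $N$, all contained in $N\smallsetminus(\Rr P^{m-1}\times\{t_0\})$, and form $M'=M_1\#\dots\#M_k\#N$ by attaching $1$-handles joining each $M_i$ to $N$; the result $M'$ is connected. Because any real line bundle is trivial over a disk, after homotoping $f$ and $g$ to be constant near the chosen disks — which changes neither $[M,f]$ nor $[N,g]$ — the bundles $f^*H$ and $p^*H$ glue across the handles and the classifying maps extend over the contractible handles, producing a pair $(M',f')$ with $\lambda'=(f')^*H$. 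Attaching these $1$-handles to $M\times[0,1]\sqcup N\times[0,1]$ along the level $\{1\}$, and extending the line bundle as above, gives a $\Zz_2$-bordism exhibiting $[M',f']=[M\sqcup N,f\sqcup g]=\alpha+0=\alpha$. By construction $M'$ is connected, contains the closed submanifold $\Rr P^{m-1}=\Rr P^{m-1}\times\{t_0\}$ (disjoint from the surgery region), and $\lambda'$ restricts to $H$ over it; this is the first assertion.

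Finally I would read off the two consequences using the embedding $i\colon\Rr P^{m-1}\hookrightarrow M'$. The preimage of $i(\Rr P^{m-1})$ under the double cover $\X'=S(\lambda')\to M'$ is $S(i^*\lambda')=S(H)=S^{m-1}$, which is connected because $m>1$; since a trivial double cover of $M'$ would restrict to a disconnected cover of $i(\Rr P^{m-1})$, the cover $S(\lambda')\to M'$ must be nontrivial, hence $S(\lambda')$ is connected. For the second consequence, fix $n<m$ and let $\Rr P^n\subseteq\Rr P^{m-1}\subseteq M'$ be the standard linear subspace. Over $\Rr P^n$ the bundle $n\lambda'=\Rr^n\otimes\lambda'$ restricts to $nH$, whose top \SW class is $w_n(nH)=w_1(H)^n\neq 0$ in $H^n(\Rr P^n;\Zz /2\Zz )$; therefore $nH$ over $\Rr P^n$ admits no nowhere-zero section, so every section of $nH$ over $\Rr P^n$ vanishes somewhere. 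Restricting an arbitrary section of $n\lambda'$ to $\Rr P^n$ thus forces a zero in $\Rr P^n$, as claimed; combined with Lemma \ref{Lemma2.3} this shows that the BUP holds for $((\X',\tau');\Rr^n)$.

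I expect the only step demanding real care to be the compatibility of the $BO (1)$-structure with the connected sum — that the line bundle and its classifying map genuinely extend over the $1$-handles and that the resulting pair is bordant to $(M,f)\sqcup(N,g)$ — which is standard surgery bookkeeping once one observes that everything in sight is trivial over the contractible handles. The conceptual input is simply the choice of the null-bordant auxiliary pair $(\Rr P^{m-1}\times S^1,\,p^*H)$; the two consequences are then immediate from the elementary topology of real projective space (equivalently, from the Euler-class obstruction underlying Lemma \ref{Lemma2.4}).
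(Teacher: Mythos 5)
Your argument is correct and is essentially the same surgery/connected-sum strategy used in the paper: adjoin a null-bordant auxiliary pair carrying a copy of $\Rr P^{m-1}$ with the Hopf bundle, then connect-sum to kill the extra components, keeping the $\Rr P^{m-1}$ away from the surgery region. The one genuine difference is the choice of auxiliary: you take $(\Rr P^{m-1}\times S^1,\,p^*H)$, which is manifestly a boundary (bounding $\Rr P^{m-1}\times D^2$), whereas the paper adjoins two disjoint copies of $(\Rr P^m,h)$ (null-bordant because coefficients are mod $2$) and connects into one of them last, so the embedded $\Rr P^{m-1}$ is the standard hyperplane in $\Rr P^m$. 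Both choices serve equally well; yours makes the null-bordism slightly more transparent, the paper's is marginally more economical. Your bordism bookkeeping (the $1$-handle cobordism and extension of the line bundle over contractible handles, with the usual remark that trivializations over $S^{m-1}$ agree up to homotopy since $\pi_{m-1}(\O(1))=0$ for $m\geq 2$) is sound, and the two consequences are handled correctly: nontriviality of $\lambda'|_{\Rr P^{m-1}}$ forces the double cover $S(\lambda')\to M'$ over the connected $M'$ to be nontrivial, hence connected, and $w_n(nH)=w_1(H)^n\neq 0$ on $\Rr P^n$ obstructs a nowhere-zero section. This matches the paper's logic, which is left more implicit there.
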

\begin{proof}
This may be proved by an elementary surgery argument as follows.
Consider a class $(M,f)$. By adding, if necessary, 
the disjoint union of two copies of $\Rr P^m$
and the classifying map $h: \Rr P^m \to   BO (1)$ of the Hopf line bundle, 
we may assume
that at least one component of $(M,f)$ is $(\Rr P^m , h)$. 
We now use the connected sum to reduce the number of components one at a time,
connecting in one of the components $(\Rr P^m , h)$ at the last step
to produce $(M',f')$ cobordant to $(M,f)$ with $M'$ connected.
By construction, $M'$ contains an embedded submanifold $\Rr P^{m-1}$
such that $(f')^*H$ restricts to the Hopf bundle.
The condition that $m$ is strictly greater than $1$,
so that $m-1\geq 1$, then guarantees that $\lambda'$ is non-trivial.
\end{proof}
\begin{rem}
The case $m=1$ is exceptional. The group $\NN_1(BO (1))$ has
two elements $0$ and $p_1$.
The class $p_1$ is represented by the manifold $M=\Rr P^1$ and
the classifying map of the Hopf line bundle $\lambda =H$, 
and $S(\lambda )= S^1$ is connected.
The only connected representative of $0$ is the circle $M=S^1$
with the trivial bundle $\lambda$, but in that case $\X =S(\lambda )$ is not
connected.
\end{rem}
\begin{prop} \label{Corollary2.6}  
A map $(M,f)$ is cobordant in $\NN_m(BO (1))$ to a map $(M',f')$ 
such
that $(f')^*(m H)$ has a nowhere zero section if and only if 
$$
w_1(f^* H)^m[M] = 0.
$$
\end{prop}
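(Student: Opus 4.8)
The plan is to prove the two implications separately, writing $\lambda=f^*H$ and, for a second pair, $\lambda'=(f')^*H$, and using Lemma~\ref{Lemma2.4} to translate the existence of a nowhere zero section of $(f')^*(mH)=m\lambda'$ into the vanishing of the cohomology class $w_1(\lambda')^m\in H^m(M';\Zz/2\Zz)$. The one external fact I will use is that the Stiefel--Whitney number $w_1(f^*H)^m[M]\in\Zz/2\Zz$ depends only on the cobordism class $[M,f]\in\NN_m(BO(1))$: this is the standard observation that a characteristic number of a boundary vanishes, applied to a cobordism $(W,F)$ with $\partial(W,F)=(M,f)$.

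For the ``only if'' implication, suppose $(M,f)$ is cobordant to $(M',f')$ and that $m\lambda'$ admits a nowhere zero section. By Lemma~\ref{Lemma2.4}, applied one component of $M'$ at a time so that connectedness is not required, $w_1(\lambda')^m=0$; in particular $w_1(\lambda')^m[M']=0$, and by cobordism invariance $w_1(f^*H)^m[M]=w_1(\lambda')^m[M']=0$.

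For the ``if'' implication, suppose $w_1(f^*H)^m[M]=0$ and $m>1$. Proposition~\ref{Proposition3.1} furnishes a representative $(M',f')$ of $[M,f]$ with $M'$ connected and closed of dimension $m$. By cobordism invariance $w_1(\lambda')^m[M']=0$, and since $M'$ is connected the evaluation pairing $H^m(M';\Zz/2\Zz)\to\Zz/2\Zz$ is an isomorphism, so $w_1(\lambda')^m=0$ as a cohomology class. Lemma~\ref{Lemma2.4} then yields the required nowhere zero section of $(f')^*(mH)$. The case $m=1$ is degenerate: $\NN_1(BO(1))=\{0,p_1\}$ and $w_1(f^*H)[M]=0$ forces $[M,f]=0$, which is represented by a circle carrying the trivial line bundle.

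I do not expect a serious obstacle; the only point needing a little care is that for a \emph{disconnected} representative the vanishing of the \emph{number} $w_1(\lambda)^m[M]$ need not force the vanishing of the \emph{class} $w_1(\lambda)^m$, which is exactly why one first passes to a connected representative via Proposition~\ref{Proposition3.1} (here the hypothesis $m>1$ is used). An alternative proof, avoiding Proposition~\ref{Proposition3.1} and valid for all $m$, expands $[M,f]=a_0p_m+\cdots+a_mp_0$ in the $\NN_*$-basis: a direct characteristic-number computation on the standard representatives identifies $a_0$ with $w_1(f^*H)^m[M]$, so $a_0=0$; choosing closed manifolds $N_i$ representing $a_i$ for $1\le i\le m$, the representative $M'=\bigsqcup_{i=1}^m\bigl(N_i\times\Rr P^{m-i}\bigr)$ with $f'$ the Hopf classifying map on each projection factor has $w_1(\lambda')^m=t^m$ on the $i$-th component, with $t$ the generator of $H^1(\Rr P^{m-i};\Zz/2\Zz)$, and $t^m=0$ since $m\ge(m-i)+1$ for $i\ge1$.
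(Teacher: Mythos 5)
Your main argument is essentially the paper's proof: pass to a connected representative via Proposition~\ref{Proposition3.1}, use the fact that for a connected closed $m$-manifold the vanishing of the characteristic number $w_1(\lambda')^m[M']$ is equivalent to the vanishing of the class $w_1(\lambda')^m$, and then invoke Lemma~\ref{Lemma2.4}; the ``only if'' direction via componentwise application of Lemma~\ref{Lemma2.4} and cobordism invariance of the characteristic number is also exactly what the paper implicitly uses. You are somewhat more careful than the paper in two respects: you handle $m=1$ explicitly (the paper's proof tacitly assumes $m>1$, the hypothesis of Proposition~\ref{Proposition3.1}), and you correctly flag that for a disconnected representative the vanishing of the number does not force vanishing of the class. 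Your alternative argument, expanding $[M,f]=a_0p_m+\cdots+a_mp_0$, identifying $a_0$ with $w_1(f^*H)^m[M]$, and building the explicit representative $\bigsqcup_{i\ge 1}N_i\times\Rr P^{m-i}$ on which $w_1(\lambda')^m=0$ componentwise, is a clean route that bypasses the surgery in Proposition~\ref{Proposition3.1} and works uniformly in $m$; the paper instead relies on Proposition~\ref{Proposition3.1} because it reuses that construction for the connectedness statements elsewhere.
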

\begin{proof}
We may assume, by Proposition \ref{Proposition3.1}, that
$M$ is connected, and then the bordism invariant $w_1(\lambda )^m[M]$ is
zero if and only if $w_1(\lambda )^m=0$. Now apply Lemma \ref{Lemma2.4}.
\end{proof}
\begin{proof}[Proof of Theorem 1.2]
To see that the bordism invariant $w_1(\lambda )^m[M]$ is
equal to $a_0$, it is enough to check when $M$ has the form 
$N\times \Rr P^{m-i}$ for some manifold $N$ of dimension $i$ and $\lambda$
is the pullback of the Hopf bundle over $\Rr P^{m-i}$. 
Then $w_1(\lambda )^m=0$ if $i>0$.

The assertion (ii) is immediate from Proposition \ref{Proposition3.1} which
provides a representative $(M,f)$ with $M$ connected and $\lambda$ non-trivial,
so that $\X =S(\lambda )$ is also connected.

If $a_0=1$, we must have $w_1(\lambda )^m\not=0$,
and, if $a_0=0$ and $M$ is connected, then $w_1(\lambda )^m=0$.
Hence, parts (iii) and (iv) follow from Theorem 1.1.
\end{proof}
\begin{prop}\label{Proposition3.3}
For $1<n\leq m$,
suppose that $(M,f)$ has the property that $n\lambda$ admits a nowhere
zero section. Then there is a map $(M',f')$ cobordant to
$(M,f)$ in $\NN_m(BO (1))$ such that $n\lambda '$ admits a nowhere zero
section, $M'$ is connected and $\lambda'$ is non-trivial.
\end{prop}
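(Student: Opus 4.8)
The plan is to mimic the elementary surgery argument of Proposition~\ref{Proposition3.1}, replacing the building block $(\Rr P^m,h)$ used there by one whose classifying map already factors through a finite projective space, so that the hypothesis that $n\lambda$ admits a nowhere zero section is automatically carried along. The first step is to reformulate that hypothesis homotopy-theoretically. Using a Euclidean metric on $\lambda$ to identify $n\lambda=\Rr^n\otimes\lambda$ with $\Hom(\lambda,\Rr^n)$, a section of $n\lambda$ is the same as a bundle map $\lambda\to\Rr^n_M$ into the trivial bundle, and it is nowhere zero exactly when this map is fibrewise injective; its image is then a line sub-bundle of $\Rr^n_M$ isomorphic to $\lambda$, and, conversely, the inclusion of any such sub-bundle gives a nowhere zero section. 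Since line sub-bundles of $\Rr^n_M$ are classified by maps $M\to\Rr P^{n-1}$ (the space of lines in $\Rr^n$), with the sub-bundle being the pullback of the tautological line bundle $\iota^*H$, where $\iota\colon\Rr P^{n-1}\hookrightarrow BO(1)=\Rr P^\infty$ is the inclusion, the hypothesis is equivalent to asserting that the classifying map $f$ of $\lambda$ factors up to homotopy as $f\simeq\iota\circ g$ for some $g\colon M\to\Rr P^{n-1}$. Replacing $f$ by $\iota\circ g$ changes neither the bordism class $[M,f]\in\NN_m(BO(1))$ nor the isomorphism class of $\lambda$, so from now on I would assume $f=\iota\circ g$ and work with the class $[M,g]\in\NN_m(\Rr P^{n-1})$.

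For the building block I would take $P=\Rr P^{n-1}\times S^{m-n+1}$, a closed smooth $m$-manifold since $m-n+1\geq1$, together with the projection $h\colon P\to\Rr P^{n-1}$. Two properties are then easily checked: (a) $[P,h]=0$ in $\NN_m(\Rr P^{n-1})$, because $P=\partial(\Rr P^{n-1}\times D^{m-n+2})$ and $h$ extends over $\Rr P^{n-1}\times D^{m-n+2}$ as the projection; and (b) the bundle $h^*(\iota^*H)$ restricts on a slice $\Rr P^{n-1}\times\{*\}$ to the tautological line bundle over $\Rr P^{n-1}$, which is non-trivial because $n-1\geq1$.

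Now, letting $M_1,\dots,M_k$ be the components of $M$ (the case $M=\emptyset$ being trivial, with $M'=P$), I would form the iterated connected sum $M'=M_1\#\cdots\#M_k\#P$, performed in the interiors and arranged so that a slice $\Rr P^{n-1}\times\{*\}$ of the $P$-summand, together with the restriction of $h$ to it, is left untouched; after homotoping the maps involved to be constant, with matching values, near the removed balls, they extend to a map $g'\colon M'\to\Rr P^{n-1}$, exactly as in the proof of Proposition~\ref{Proposition3.1}. Since an iterated connected sum is bordant, rel maps, to the corresponding disjoint union, we get $[M',g']=[M,g]+[P,h]=[M,g]$ in $\NN_m(\Rr P^{n-1})$; pushing forward along $\iota_*$ and putting $f'=\iota\circ g'$ gives $[M',f']=[M,f]$ in $\NN_m(BO(1))$. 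It remains to read off the three required properties of $(M',f')$: $M'$ is connected by construction; $\lambda'=(f')^*H=(g')^*(\iota^*H)$ restricts to the non-trivial tautological line bundle on the untouched slice $\Rr P^{n-1}\times\{*\}\subset M'$, hence is non-trivial; and, since $g'$ maps into $\Rr P^{n-1}$, the bundle $n\lambda'=(g')^*\bigl(n(\iota^*H)\bigr)$ admits a nowhere zero section, obtained by pulling back the section coming from the splitting $n(\iota^*H)\cong T\Rr P^{n-1}\oplus\Rr$, or by invoking the reformulation of the first paragraph in reverse.

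The one genuinely delicate point — and the only real departure from Proposition~\ref{Proposition3.1} — is the choice of building block. Connect-summing with $(\Rr P^m,h)$, where $h$ classifies the Hopf line bundle, as is done there, would in general destroy the nowhere zero section: for $n\leq m$ the bundle $\Rr^n\otimes h^*H$ over $\Rr P^m$ has non-zero top \SW class $t^n$, so $h^*H$ is not a line sub-bundle of $\Rr^n_{\Rr P^m}$. The manifold $P=\Rr P^{n-1}\times S^{m-n+1}$ is instead chosen precisely so that its classifying map already factors through $\Rr P^{n-1}$, which keeps the section, while still being non-trivial enough — essentially, non-trivial on $\pi_1$ — to force $w_1(\lambda')\neq0$. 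Everything else, namely the homotopy-theoretic reformulation, the connected-sum bordism, and the book-keeping of the maps near the removed balls, is routine and parallels the proof of Proposition~\ref{Proposition3.1}.
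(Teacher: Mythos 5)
Your proof is correct and follows the same basic strategy as the paper's: modify $(M,f)$ by iterated connected sums with a null-bordant building block that is connected, carries a non-trivial line bundle, and (crucially) has a classifying map landing in a finite projective space $\Rr P^{k}$ with $k\leq n-1$, so that the nowhere-zero section of $n\lambda$ is not destroyed. Two minor differences are worth noting. First, the building block: you use $\Rr P^{n-1}\times S^{m-n+1}=\partial(\Rr P^{n-1}\times D^{m-n+2})$ mapping to $\Rr P^{n-1}$, whereas the paper uses the more economical $S^{m-1}\times\Rr P^1=\partial(D^m\times\Rr P^1)$ mapping to $\Rr P^1\subset\Rr P^{n-1}$; both are valid, and both avoid the trap you correctly identify, namely that the $(\Rr P^m,h)$ building block of Proposition~\ref{Proposition3.1} would kill every section of $n\lambda'$. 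Second, you make explicit the homotopy-theoretic reformulation (a nowhere-zero section of $n\lambda\cong\Hom(\lambda,\Rr^n)$ is the same as a factoring of $f$ through $\Rr P^{n-1}$, i.e.\ $\lambda$ is a line sub-bundle of $\Rr^n_M$), which renders the survival of the section under connected sum automatic. The paper leaves this implicit; one can also see it more directly, without the reformulation, by observing that each section restricted to the gluing sphere $S^{m-1}$ extends over the removed disc, hence is null-homotopic as a map $S^{m-1}\to\Rr^n\setminus 0$, so the two pieces can be glued compatibly. Your reformulation is the cleaner way to organize this, and your proof is arguably more careful than the paper's terse argument, but the underlying idea is the same.
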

\begin{proof}
Let $h : \Rr P^1 \to BO (1)$ classify the Hopf bundle $H$ over the projective
line.
Then $S^{m-1}\times (\Rr P^1,h)$ is the boundary of $D^m\times (\Rr P^1,h)$
and $n(h^*H)$ admits a nowhere zero section.
We can now argue as in the proof of Proposition \ref{Proposition3.1}
to construct $(M',f')$ cobordant to $(M,f)$ and containing an embedded
$\Rr P^1$ on which $\lambda'$ restricts to $H$.
\end{proof}
\section{The cobordism Euler class and the BUP for higher codimension}
We begin with some remarks on the cobordism Euler class.
Consider an $n$-dimensional real vector bundle $\xi$ over
the closed manifold $M$ of dimension $m$.
We recall that $\xi$ has a canonical cobordism 
Thom class $u_\NN (\xi )\in \NN^n(D(\xi), S(\xi)))$ in the cobordism
of the disc modulo the sphere bundle;
see, for example, \cite{BtD}.
The {\it cobordism Euler class} $e_\NN (\xi )\in \NN^n(M)$ is the
restriction $z^*u_\NN (\xi )$ to the zero-section $z$ of $\xi$.

Now suppose that the vector bundle $\xi$ is smooth and consider
a smooth section $s$ of $\xi$ that is transverse to the zero-section $z$.
The zero-set $Z$ is then a smooth submanifold 
of $M$ of dimension $m-n$ with its normal bundle isomorphic to $\xi$. 
This manifold $Z\hookrightarrow M$ represents a bordism class 
$[Z]\in\NN_{m-n}(M)$,
which corresponds under the duality isomorphism $\NN_{m-n}(M)=\NN^n(M)$
to the Euler class $e_\NN (\xi )$.
\begin{rem}\label{euler}
In mod $2$ homology, $Z$ represents an element of $H_{m-n}(M)$
dual to the cohomology Euler class $w_n(\xi )\in H^n(M)$.
This means that,
for a class $x\in H^{m-n}(M)$ restricting to $x|Z\in H^{m-n}(Z)$, 
we have $(x\cdot w_n(\xi ))[M]= (x|Z) [Z]\in \Zz /2\Zz$,
where here $[Z]$ and $[M]$ are the respective fundamental classes.
\end{rem}

We now specialize to the case of interest in which
we have a map $f : M \to  BO (1)$ classifying a line bundle $\lambda= f^*H$
and take $\xi =n\lambda$. 
\begin{defn}
The submanifold $Z\subseteq M$ described above
represents a class $[Z]\in\NN_{m-n}(M)$, dual to
the Euler class $e_{\NN}(\lambda)^n = e_{\NN}(n\lambda)\in \NN^n(M)$.
Its image $f_*[Z]\in \NN_{m-n} (BO (1))$ under $f$ will be denoted
by $e_n(M,f)$.
\end{defn}
\begin{lem}\label{bordisminv}
The class $e_n(M,f)\in\NN_{m-n}(BO (1))$ depends only on the bordism
class $[M,f]\in\NN_m(BO (1))$.
\end{lem}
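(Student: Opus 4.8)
The plan is to show that $e_n(M,f)$ is a bordism invariant by exhibiting it as the image, under a naturally defined transfer/duality construction, of the cobordism Euler class $e_\NN(n\lambda)$, which is itself manifestly natural. Concretely, suppose $(M,f)$ and $(M',f')$ are cobordant via $(W,F)$, where $W$ is a compact $(m+1)$-manifold with $\partial W = M \sqcup M'$ and $F\colon W \to BO(1)$ restricts to $f$ and $f'$ on the boundary. Set $\Lambda = F^*H$, a line bundle over $W$ restricting to $\lambda$ and $\lambda'$. First I would choose a smooth section $s$ of $n\Lambda$ over $W$ that is transverse to the zero-section and whose restrictions to $M$ and $M'$ are themselves transverse to the zero-section (a generic relative section will do, by the relative transversality theorem). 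Then $Z := s^{-1}(0)$ is a compact $(m+1-n)$-manifold with boundary $\partial Z = (Z \cap M) \sqcup (Z \cap M')$, and these two pieces are exactly the zero-sets computing $e_n(M,f)$ and $e_n(M',f')$, with normal bundles $n\lambda$ and $n\lambda'$ respectively.

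The key step is then to observe that $F|_Z\colon Z \to BO(1)$ gives a bordism in $\NN_{m-n}(BO(1))$ between $(F|_Z)|_{Z\cap M} = f_*[Z\cap M]$ and $(F|_Z)|_{Z\cap M'} = f'_*[Z\cap M']$. Since by definition $e_n(M,f) = f_*[Z\cap M]$ and $e_n(M',f') = f'_*[Z\cap M']$, this yields $e_n(M,f) = e_n(M',f')$ in $\NN_{m-n}(BO(1))$. I would also remark that one must check the zero-set $Z\cap M$ appearing here agrees, up to bordism over $BO(1)$, with the zero-set of an \emph{arbitrary} transverse section of $n\lambda$ over $M$; this is the standard fact that the bordism class $[Z]\in\NN_{m-n}(M)$ dual to $e_\NN(n\lambda)$ is independent of the chosen section, proved by the same relative-section argument applied to $M\times[0,1]$. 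Alternatively, and perhaps more cleanly, I would phrase the whole argument in terms of the duality isomorphism $\NN_{m-n}(M)\cong\NN^n(M)$ already invoked in the excerpt: the Euler class $e_\NN(n\lambda)\in\NN^n(M)$ is natural in $(M,\lambda)$, and applying $f_*$ to its Poincar\'e dual and then invoking naturality of Euler classes and of the duality pairing under the bordism $W$ gives the result directly.

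The main obstacle I anticipate is purely technical: ensuring that the relative section over $W$ can be chosen transverse \emph{simultaneously} on the interior and on both boundary components, so that $Z$ is a genuine manifold with boundary rather than something with corners or non-transverse strata. This is handled by first choosing transverse sections $s_0$ of $n\lambda$ over $M$ and $s_1$ of $n\lambda'$ over $M'$, then extending $s_0\sqcup s_1$ over a collar neighbourhood of $\partial W$ in $W$, and finally perturbing the extension to the interior rel the collar to achieve transversality there; the relative parametric transversality theorem applies because $n\lambda$, having rank $n < m+1$, admits enough sections. Once this is in place the rest is formal bookkeeping with fundamental classes and the definition of $e_n$.
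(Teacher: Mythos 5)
Your proof is correct and uses essentially the same idea as the paper's: extend the section over the cobordism, choose it transverse to the zero-section with transverse restriction at the boundary, and observe that the zero-set, together with the restriction of the classifying map, provides the required bordism in $\NN_{m-n}(BO(1))$. The only superficial difference is that the paper phrases the argument for a null-bordism of $M$ (showing $e_n$ vanishes on boundaries, from which well-definedness follows by additivity over disjoint unions), whereas you phrase it directly for a cobordism between $M$ and $M'$; these are the same argument.
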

\begin{proof}
Suppose that $M=\partial W$ is the boundary of a compact manifold $W$
and that $f: M\to BO (1)$ extends to a map $h : W\to BO (1)$.
We may choose a smooth section of $h^*(nH)$ over $W$ which is
transverse to the zero-section and such that its zero-set $V$ intersects
$M$ transversely in its boundary $Z=\partial V$.
Then $(V,h|V)$ bounds $(Z,f|Z)$, and hence $f_*[Z]=0$.

See, also, the proof of Lemma \ref{Lemma4.4}.
\end{proof}
If $n\lambda$ admits a nowhere zero section, then $e_\NN (\lambda )^n=0$
and hence $e_n(M,f)=0$. Arguments going back to the work of Koschorke 
\cite{Koschorke} show that the vanishing of $e_n(M,f)$ is sufficient
for the existence of a nowhere zero section up to cobordism.
\begin{prop} \label{Proposition4.3} 
Suppose that $1\leq n\leq m$.
A map $(M, f )$ is cobordant in $\NN_m (BO (1))$ 
to a map $(M' , f')$
such that the multiple $n\lambda'$ of $\lambda'= (f')^*(H)$ 
has a nowhere zero section 
if and only if $e_n(M, f ) = 0$.
\end{prop}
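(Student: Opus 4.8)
The plan is to prove the two implications separately. One direction is already noted in the text: if $n\lambda'$ has a nowhere zero section then $e_\NN(\lambda')^n = 0$, hence $e_n(M',f') = 0$; since $e_n$ is a bordism invariant (Lemma \ref{bordisminv}), $e_n(M,f) = 0$. So the substance is the converse: assuming $e_n(M,f) = 0$, construct a cobordant $(M',f')$ on which $n\lambda'$ admits a nowhere zero section. This is the Koschorke-style argument and will be the main obstacle.

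For the converse, first I would choose a smooth section $s$ of $n\lambda$ transverse to the zero-section, with zero-set $Z \subseteq M$, a closed $(m-n)$-manifold with normal bundle $\nu(Z\hookrightarrow M) \cong n\lambda|_Z = n(\lambda|_Z)$. The hypothesis $e_n(M,f) = f_*[Z] = 0$ in $\NN_{m-n}(BO(1))$ says exactly that $(Z, f|_Z)$ bounds: there is a compact $(m-n+1)$-manifold $V$ with $\partial V = Z$ and an extension $g : V \to BO(1)$ of $f|_Z$. The key geometric point is that the normal bundle data extends too: since $n\lambda|_Z = n(f|_Z)^*H$ and $g$ extends $f|_Z$, the bundle $n(g^*H)$ over $V$ restricts to the normal bundle of $Z$ in $M$. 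I would then form the cobordism $W = (M \times [0,1]) \cup_{Z\times\{1\}} (\text{tubular neighbourhood of } V)$ — more precisely, glue $M\times\{1\}$ to the total space of the disc bundle $D(n(g^*H))$ over $V$ along the tubular neighbourhood of $Z = \partial V$ in $M$, which is $D(n(\lambda|_Z))$. This $W$ is a compact manifold with $\partial W = (M\times\{0\}) \sqcup M'$, where $M'$ is obtained from $M$ by deleting the tubular neighbourhood of $Z$ and gluing in the sphere bundle $S(n(g^*H))$ over $V$; the classifying maps $f$ and $g$ patch to a map $F : W \to BO(1)$ restricting to $f$ on $M$ and to some $f'$ on $M'$.

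It remains to check that $n\lambda' = n(f')^*H$ has a nowhere zero section over $M'$, where $\lambda' = F^*H|_{M'}$. On the part of $M'$ coming from $M \setminus (\text{tube around } Z)$ the section $s$ was already nowhere zero, and on the piece $S(n(g^*H)) \to V$ that replaced the tube, the bundle $n\lambda'$ pulls back from $V$ and admits the tautological nowhere zero section (the sphere bundle of a bundle always admits a nowhere zero section of the pullback of that bundle). The only real work is to verify that these two sections agree on the overlap $S(n(\lambda|_Z)) = \partial(\text{tube})$, which holds by construction since near $Z$ the transverse section $s$ looks, in the tubular neighbourhood coordinates, precisely like the radial (tautological) section. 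I would organize this gluing of sections carefully, perhaps after an isotopy making $s$ exactly radial near $Z$, and note that one may further assume $M'$ connected and $\lambda'$ nontrivial by applying Proposition \ref{Proposition3.3} (when $1 < n \le m$) or directly when $n = 1$. The hard part throughout is bookkeeping the bundle identifications along the several gluings, rather than any single conceptual difficulty; a cross-reference to the proof of Lemma \ref{Lemma4.4} handles the fine points.
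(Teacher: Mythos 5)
Your proposal is correct and follows essentially the same route as the paper: the forward direction via bordism invariance of $e_n$, and the converse via the Koschorke-style surgery replacing a tubular neighbourhood $D(\nu)$ of the zero-set $Z$ by $S(g^*(nH))$ over a null-bordism $V$ of $(Z, f|_Z)$, with the nowhere zero section obtained by gluing $s$ (made radial near $Z$) to the tautological section, and the cobordism $W=(M\times[0,1])\cup_{D(\nu)\times\{1\}}D(g^*(nH))$. The only presentational difference is that you build the cobordism $W$ first and read off $M'$ as its top boundary, whereas the paper constructs $M'$ directly by surgery and then exhibits $W$; your closing remark about invoking Proposition~\ref{Proposition3.3} for connectedness is harmless but not part of this proposition.
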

\begin{proof}
Retaining the notation used in the construction of $e_n(M,f)$,
suppose that $e_n(M, f ) = 0$.
Then we have a compact manifold $V$ of dimension $m-n+1$ with 
boundary $\partial V = Z$ and a map
$g : V\to   BO (1)$ extending $f|Z$. 
Choose an inner product on the normal bundle $\nu$ of the embedding
$Z\hookrightarrow M$ and a tubular neighbourhood $D(\nu)\hookrightarrow M$.
Over the closed disc bundle $D(\nu)$ the bundle $n\lambda$ is identified 
(up to homotopy) with the pullback of $\nu$ from $Z$ and 
the section $s$ corresponds to the diagonal (or identity) section. 

Now the boundary $\partial(M - B(\nu))$
of the complement of the open disc bundle $B(\nu )$ 
is the sphere-bundle $S(\nu)=S(n\lambda | Z )$. 
This is the same as the boundary $\partial S(g^*(nH)) = \partial S(f^*(nH)|Z)$
of the sphere bundle of $g^*(nH)$ over $V$. 
We can thus perform surgery to construct the manifold
$$   
M' = (M - B(\nu))\cup_{S(\nu)}S (g^*(nH)) 
$$
and extend $f$ on $M - B(\nu)$ to a map $f' : M'\to BO (1)$ by using $g$. 
A nowhere zero section $s'$ of $n\lambda'$ is obtained
by gluing $s$ on $M - B(\nu)$ and the tautological identity section
of $S(n\lambda')$ over $S(g^*(nH))$. 

The manifolds
$M$ and $M'$ are cobordant by the bordism
$$               
(M\times [0,1])\cup_{D(\nu )\times \{ 1\}}(D(g^*(nH))\times\{1\})
$$
(after smoothing the corners)
with boundary $(M\times\{0\})\cup (M'\times\{ 1\})$.
\end{proof}
We recall in the next lemma the calculation of the bordism
of $\Rr P^\infty$.
\begin{lem} \label{Lemma4.4} 
Let $p_i\in \NN_m(BO (1))$ be the class
represented by the real projective space $\Rr P^i$ and
the classifying map $\Rr P^i\to BO (1)$ of the Hopf bundle.
Then
$$
\NN_m(BO (1))= \bigoplus_{0\leq i\leq m}\NN_{m-i}\, p_i.
$$
The map $e_n : [M, f ] \mapsto e_n(M, f)$ fits 
into a short exact sequence:
$$  
0 \to \NN_m(P(\Rr^n)) \to \NN_m(BO (1))\xrightarrow{e_n} 
\NN_{m-n}(BO (1)) \to   0
$$
in which 
the homomorphism $e_n : \NN_{*}(BO (1)) \to  \NN_{*-n}(BO (1))$  
is $\NN_*$-linear and 
$$
\text{$e_n(p_m) =p_{m-n}$ if $m \geq  n,$ $e_n(p_m)=0$ if $m < n$.}
$$
\end{lem}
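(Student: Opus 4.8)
The plan is to establish the three assertions of Lemma \ref{Lemma4.4} in turn, with the short exact sequence being the heart of the matter.

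\textbf{The module structure.}
The decomposition $\NN_m(BO(1)) = \bigoplus_{0\le i\le m}\NN_{m-i}\,p_i$ is simply Proposition \ref{Theorem2.2} transported through the isomorphism $\NN_*(\Zz_2)\cong\NN_*(BO(1))$, together with the fact, quoted from \cite{BtD} in the introduction, that $\NN_*(\Zz_2)$ is $\NN_*$-free on the generators $[S^i,A]$; under the correspondence of Proposition \ref{Theorem2.2} the pair $(S^i,A)$ goes to $(\Rr P^i,h)$, i.e.\ to $p_i$. So I would just cite these.

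\textbf{The computation $e_n(p_m)$.}
First I compute $e_n$ on the generators. Take $M=\Rr P^m$ with $f=h$ classifying $\lambda=H$. A section of $nH$ transverse to zero has zero-set a linearly embedded $\Rr P^{m-n}\subseteq\Rr P^m$ when $m\ge n$ (concretely, view $\Rr^{n}\otimes H$ as sections of $H$ given by $n$ linear forms and take a generic such $n$-tuple; its common zero locus is the projectivization of the intersection of $n$ hyperplanes in $\Rr^{m+1}$), with normal bundle $n(H|_{\Rr P^{m-n}})$. The inclusion $\Rr P^{m-n}\hookrightarrow\Rr P^m$ followed by $h$ is exactly the classifying map of the Hopf bundle over $\Rr P^{m-n}$, so $f_*[Z]=p_{m-n}$. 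When $m<n$, a generic $n$-tuple of linear forms on $\Rr^{m+1}$ has no common nonzero zero, so the section is nowhere zero and $e_n(p_m)=0$. The $\NN_*$-linearity of $e_n$ is immediate from the construction: for a product manifold $N\times(\Rr P^m,h)$ one can take a section of $n\lambda$ pulled back from the second factor, whose zero-set is $N\times Z$.

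\textbf{The short exact sequence.}
This is where the real work lies, and it is the step I expect to be the main obstacle. By $\NN_*$-linearity and the generator computation, $e_n$ is surjective: every $p_{m-n-j}a$ with $a\in\NN_j$ is $e_n(p_{m-j}a)$. For exactness in the middle, first note $e_n\circ(\text{map induced by }P(\Rr^n)\hookrightarrow BO(1))=0$: the tautological line bundle over $P(\Rr^n)=\Rr P^{n-1}$ pulls back from $H$, and $n$ copies of it have a nowhere-zero section (the $\Rr^n$-valued section is injective on each fibre since a line in $\Rr^n$ together with a nonzero vector spanning it gives $n$ coordinates not all zero — more simply, $n\cdot(\text{taut. bundle over }\Rr P^{n-1})$ contains the trivial bundle $\Rr P^{n-1}\times\Rr^n \supseteq$ the tautological $\Rr^1$, whose complement already splits off — the cleanest statement is that the identity section of $\Rr^n\otimes(\text{taut})$ is nowhere zero). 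Hence $e_n=0$ on the image. The nontrivial inclusion is: if $e_n[M,f]=0$, then $[M,f]$ lies in the image of $\NN_m(P(\Rr^n))$. Here I invoke Proposition \ref{Proposition4.3}: $e_n(M,f)=0$ means $(M,f)$ is cobordant to $(M',f')$ with $n\lambda'$ having a nowhere-zero section $s'$. Such a section is a bundle monomorphism $\underline{\Rr}\hookrightarrow \Rr^n\otimes\lambda'$, equivalently a nowhere-zero vector in $\Rr^n\otimes\lambda'_x$ for each $x$, equivalently (tensoring with $\lambda'$, which is self-inverse) a line in $\Rr^n$ varying over $M'$ together with a trivialization datum — precisely a lift of $f':M'\to BO(1)$ through $P(\Rr^n)=\Rr P^{n-1}\to\Rr P^\infty=BO(1)$. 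Thus $[M',f']$ is in the image of $\NN_m(P(\Rr^n))$, proving exactness there. Finally, injectivity of $\NN_m(P(\Rr^n))\to\NN_m(BO(1))$: the map $\Rr P^{n-1}\to\Rr P^\infty$ is split up to bordism by a retraction-like argument on bordism groups, or more directly $\NN_*(\Rr P^{n-1})\to\NN_*(\Rr P^\infty)$ is injective because both are free $\NN_*$-modules on classes $[\Rr P^i\hookrightarrow]$ for $i<n$ and $i\ge 0$ respectively, and the map sends basis to (part of the) basis. The only genuinely delicate point is verifying that a nowhere-zero section of $n\lambda'$ corresponds exactly to a factorization of the classifying map through $P(\Rr^n)$ rather than merely through some homotopy-theoretic approximation; I would spell this out by identifying, for a line $L\subseteq\Rr^n$, the fibre $L\otimes\lambda'_x$ and observing a nonzero element of it is the same as a point of $S(\Rr^n\otimes\lambda'_x)$, whose fibrewise quotient by $\pm1$ is $P(\Rr^n)$, matching the pullback description of $\lambda'$ in Section 2. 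The remainder is bookkeeping.
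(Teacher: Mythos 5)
Your proof is correct, but it takes a genuinely different route to the short exact sequence than the paper does. The paper identifies $e_n$ with multiplication by the cobordism Euler class $e_\NN(H)^n$, realizes $\NN_m(BO(1))$ as $\NN_m(P(\Rr^N))$ for $N$ large, and then reads off the short exact sequence from the long exact sequence of the pair $(P(\Rr^N),P(\Rr^n))$ together with the Thom isomorphism $\NN_m(P(\Rr^N),P(\Rr^n))\cong\NN_{m-n}(P(\Rr^{N-n}))$; the splitting of the LES into short exact sequences follows because the inclusion of basis elements makes $\NN_*(P(\Rr^n))\to\NN_*(P(\Rr^N))$ split injective. You instead build the exactness by hand: surjectivity from the computation $e_n(p_m)=p_{m-n}$ and $\NN_*$-linearity; containment of the image of $\NN_m(P(\Rr^n))$ in $\ker e_n$ from the tautological nowhere-zero section of $\Rr^n\otimes(\text{taut})$ (your phrasing of this is a bit garbled, but the clean statement is that the inclusion $H_{n-1}\hookrightarrow\Rr^n$ gives a nowhere-zero section of $\Hom(H_{n-1},\Rr^n)\cong nH_{n-1}$); the reverse containment from Proposition \ref{Proposition4.3} plus the observation that a nowhere-zero section of $n\lambda'$ produces, after twisting by $\lambda'$, a line subbundle $\lambda'\hookrightarrow M'\times\Rr^n$ and hence a factorization of the classifying map through $P(\Rr^n)$; and injectivity from freeness of both bordism modules. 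The trade-off: the paper's argument is shorter and self-contained at the level of homotopy theory (it does not need the surgery argument of Proposition \ref{Proposition4.3}), while yours is more geometric and makes transparent why $P(\Rr^n)$ is the relevant subspace. Note that your route does lean on Proposition \ref{Proposition4.3}, which is proved earlier and is logically independent of the lemma, so there is no circularity; it also invokes the standard fact that $\NN_*(\Rr P^{n-1})$ is $\NN_*$-free on the subprojective spaces, which is not stated elsewhere in the paper but is classical.
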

\begin{proof}[Some remarks on the proof]
By the construction of $e_n(M, f)$, the homomorphism $e_n$ is 
multiplication by
the cobordism Euler class $e_{\NN}(H)^n $. 
We can realize $\NN_m(BO (1))$  as $\NN_m (P(\Rr^N))$
for large $N$ and then the sequence is the exact sequence of the pair 
$(P(\Rr^N), P(\Rr^n))$:
$$    
\NN_m(P(\Rr^n)) \to  \NN_m(P(\Rr^N)) \to
 \NN_m(P(\Rr^N), P({\Rr^n})) = \NN_{m-n}(P(\Rr^{N -n} )).
$$
The calculation of $e_n(p_m)$ can be made by applying the construction of 
$e_n(M, f)$
to $M = \Rr P^m$ with $f$ classifying the Hopf line bundle.
One has a section of $n\lambda$ with zero-set $\Rr P^{m-n}$ if $m\geq n$
and empty zero-set if $m<n$.
\end{proof}
We can now state two corollaries of Proposition \ref{Proposition4.3}.
\begin{cor} \label{Corollary4.5} 
{\rm (Compare \cite[Proposition 2.2]{GHZ}.)}
The class $e_n(M, f)$ vanishes if and only if $(M, f)$ is cobordant to
some $(M', f')$ for which the map $f'$ factors through 
the classifying map $P(\Rr^n ) = \Rr P^{n-1} \to BO (1)$.  
\qed
\end{cor}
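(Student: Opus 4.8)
The plan is to deduce this from Proposition~\ref{Proposition4.3} by translating the homotopy-theoretic condition --- that $f'$ factor through $P(\Rr^n)=\Rr P^{n-1}\hookrightarrow BO(1)$ --- into the geometric condition occurring there, namely that $n\lambda'=\Rr^n\otimes\lambda'$ admit a nowhere zero section. Accordingly, the first step is to record, for a fixed pair $(M',f')$ with $\lambda'=(f')^*H$, the equivalence: $f'$ is homotopic to a map factoring through $\Rr P^{n-1}\hookrightarrow BO(1)$ if and only if $n\lambda'$ has a nowhere zero section.

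For the ``only if'' half I would use that over $\Rr P^{n-1}=P(\Rr^n)$ the tautological exact sequence splits as $\underline{\Rr^n}=H\oplus Q$ with $Q$ of rank $n-1$; tensoring with $H$ and using that $H\otimes H$ is trivial gives $nH\cong\underline{\Rr}\oplus(Q\otimes H)$, which has an evident nowhere zero section, and pulling back along $M'\to\Rr P^{n-1}$ shows $n\lambda'$ does too. For the ``if'' half I would pass to the double cover $S(\lambda')=\widetilde{M'}\to M'$ associated with $\lambda'$: the classifying map $f'$ is covered by a $\Zz_2$-equivariant map $\widetilde{f'} : \widetilde{M'}\to S^\infty$, and this lift is unique up to equivariant homotopy since $S^\infty$ is a model for $E\Zz_2$. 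A nowhere zero section of $n\lambda'=\widetilde{M'}\times_{\Zz_2}\Rr^n$, after division by its norm, is precisely a $\Zz_2$-map $\widetilde{M'}\to S^{n-1}$; composing with the equivariant inclusion $S^{n-1}\hookrightarrow S^\infty$ and invoking uniqueness, $\widetilde{f'}$, and hence $f'$, is homotopic to a map landing in $\Rr P^{n-1}$.

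Granting this equivalence, the corollary follows. If $(M,f)$ is cobordant to some $(M',f')$ with $f'$ factoring through $\Rr P^{n-1}$, then $n\lambda'$ has a nowhere zero section, so $e_n(M',f')=0$, and Lemma~\ref{bordisminv} gives $e_n(M,f)=0$. Conversely, if $e_n(M,f)=0$, then Proposition~\ref{Proposition4.3} produces $(M',f')$ cobordant to $(M,f)$ with $n\lambda'$ admitting a nowhere zero section; by the equivalence $f'$ is homotopic to a map through $\Rr P^{n-1}$, and replacing $f'$ by this homotopic map does not alter the bordism class, so $(M',f')$ is as required.

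The step I expect to cost the most care is the ``if'' direction of the equivalence: identifying a nowhere zero section of $n\lambda'$ with an equivariant factorization of $\widetilde{f'}$ through $S^{n-1}$, and being precise about uniqueness up to $\Zz_2$-homotopy of the classifying lift into $S^\infty=E\Zz_2$. The remaining steps are formal, given Proposition~\ref{Proposition4.3} and the bordism invariance of $e_n$.
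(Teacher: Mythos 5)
Your proof is correct, but it takes a genuinely different route from the one the paper intends. The paper reads the corollary off the short exact sequence of Lemma~\ref{Lemma4.4}: exactness at $\NN_m(BO(1))$ says precisely that $\ker e_n$ equals the image of $\NN_m(P(\Rr^n))\to\NN_m(BO(1))$, and by the very definition of that induced map, the image consists of the bordism classes of pairs $(M',f')$ whose classifying map $f'$ factors (up to homotopy) through $P(\Rr^n)=\Rr P^{n-1}$. This is immediate once Lemma~\ref{Lemma4.4} is in place, since the exact sequence is that of the pair $(P(\Rr^N),P(\Rr^n))$ for large $N$. You instead deduce the corollary from Proposition~\ref{Proposition4.3} after supplying the bridge: for a fixed $(M',f')$, the bundle $n\lambda'$ has a nowhere zero section if and only if $f'$ is homotopic to a map factoring through $\Rr P^{n-1}$. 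Both halves of this auxiliary equivalence are sound. The splitting $\underline{\Rr^n}\cong H\oplus Q$ over $\Rr P^{n-1}$, combined with $H\otimes H\cong\underline{\Rr}$, gives the trivial summand of $nH$ and hence a nowhere zero section of $n\lambda'$ after pulling back. Conversely, a normalized nowhere zero section of $n\lambda'=\widetilde{M'}\times_{\Zz_2}\Rr^n$ is exactly a $\Zz_2$-map $\widetilde{M'}\to S^{n-1}$, and since any two $\Zz_2$-maps from the free $\Zz_2$-space $\widetilde{M'}$ into $S^\infty=E\Zz_2$ are $\Zz_2$-homotopic, this forces $f'$ to factor through $\Rr P^{n-1}$ up to homotopy. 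The exact sequence argument is shorter, while yours is more geometric and makes explicit an equivalence that Lemma~\ref{Lemma4.4} leaves packaged inside the identification $\NN_m(P(\Rr^N),P(\Rr^n))\cong\NN_{m-n}(P(\Rr^{N-n}))$.
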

\begin{cor}
Given $(M,f)$, the coefficient $a_n\in\NN_{m-n}$ 
in the expansion of the bordism class 
$[M,f]$ as $a_0p_m+\ldots +a_mp_0$
is represented by the zero-set of a generic smooth section of $n\lambda$.
\end{cor}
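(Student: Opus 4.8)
The plan is to read the statement off from the construction of the cobordism Euler class $e_n(M,f)$ together with the computation of $\NN_*(BO (1))$ in Lemma~\ref{Lemma4.4}, with essentially no new geometry beyond what has already been done.

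First I would make the section generic in the standard sense. By Thom transversality a smooth section of $n\lambda=\Rr^n\otimes\lambda$ may be $C^\infty$-approximated by one transverse to the zero-section, and transversality is a residual condition, so a generic section $s$ may be taken transverse to the zero-section. Its zero-set $Z\subseteq M$ is then a closed smooth submanifold of dimension $m-n$ whose normal bundle is identified with $n\lambda|_Z$, exactly as in the discussion preceding the Definition of $e_n(M,f)$. By that Definition the pair $(Z,f|_Z)$ represents $e_n(M,f)\in\NN_{m-n}(BO (1))$, and Lemma~\ref{bordisminv} shows that this class depends only on $[M,f]$; in particular it does not depend on the generic section chosen, so one may legitimately speak of the class represented by a generic zero-set. (When $e_n(M,f)=0$ the zero-set need not be empty over $M$, but $f_*[Z]=0$ and hence $Z$ is then null-bordant, in agreement with the conclusion below.)

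Next I would apply Lemma~\ref{Lemma4.4}: $e_n$ is $\NN_*$-linear with $e_n(p_j)=p_{j-n}$ for $j\geq n$ and $e_n(p_j)=0$ for $j<n$. Writing $[M,f]=a_0p_m+a_1p_{m-1}+\cdots +a_mp_0$, applying $e_n$ annihilates the terms with $p_j$, $j<n$, and shifts the remaining terms down by $n$; reading off the coefficient of $p_0$ in $e_n(M,f)=e_n([M,f])$ therefore recovers precisely the coefficient $a_n\in\NN_{m-n}$ of $p_n$ in $[M,f]$. Combining this with the first step, $a_n$ is the $p_0$-component, in the free $\NN_*$-module $\NN_{m-n}(BO (1))=\bigoplus_{0\leq l\leq m-n}\NN_{m-n-l}\,p_l$, of the class $f_*[Z]$ represented by the generic zero-set; identifying the summand $\NN_{m-n}\cdot p_0$ with $\NN_{m-n}$ exhibits $a_n$ as represented by $Z$. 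If one wants in addition $Z$ connected, or embedded in a standard $\Rr P^{m-n}\subseteq\Rr P^{m-1}\subseteq M$, one may first replace $(M,f)$ by the representative furnished by Propositions~\ref{Proposition3.1} and~\ref{Proposition4.3}, though this is unnecessary for the bordism statement.

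The step I expect to need the most care is the identification of the generic zero-set with a representative of $e_n(M,f)$: arranging transversality, checking that $Z$ is a genuine closed submanifold dual to $e_\NN(n\lambda)$ with normal bundle $n\lambda|_Z$, and that its $\NN_*(BO (1))$-class is insensitive to the choice of section. But this is exactly the transversality construction already carried out in the proof of Proposition~\ref{Proposition4.3}, combined with Lemma~\ref{bordisminv}, so no new difficulty arises. The remaining ingredient — extracting the $p_0$-coefficient via the downshift $e_n(p_j)=p_{j-n}$ — is just the bookkeeping of Lemma~\ref{Lemma4.4}.
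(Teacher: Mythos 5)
Your proof follows the same route as the paper's: construct the zero-set $Z$ of a generic transverse section of $n\lambda$, observe that $(Z,f|_Z)$ represents $e_n(M,f)\in\NN_{m-n}(BO(1))$ (Lemma~\ref{bordisminv} guaranteeing independence of the section), and then read off a coefficient using the $\NN_*$-linearity and downshift formula of Lemma~\ref{Lemma4.4}. So the approach is essentially identical; the only place you and the paper part company is in the final extraction step. The paper's proof says ``forgetting the classifying map, $a_n=[Z]$,'' i.e.\ it applies the augmentation $\NN_{m-n}(BO(1))\to\NN_{m-n}$ sending $[Z,f|_Z]\mapsto[Z]$, whereas you project onto the $p_0$-summand of the free $\NN_*$-module $\NN_{m-n}(BO(1))=\bigoplus_l\NN_{m-n-l}\,p_l$. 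These two $\NN_*$-linear maps are genuinely different: the augmentation sends $p_j\mapsto[\Rr P^j]$, which does not vanish for even $j>0$, while the $p_0$-projection kills every $p_j$ with $j>0$. Your $p_0$-projection is actually the cleaner way to isolate the stated coefficient via $e_n(p_j)=p_{j-n}$, so your bookkeeping is at least as careful as the paper's two-sentence argument. The one residual looseness in your write-up is the closing phrase ``exhibits $a_n$ as represented by $Z$'': what you have actually shown is that $a_n$ is the $p_0$-component of $f_*[Z]$, which is not literally the same assertion as $[Z]=a_n$; but this same looseness is already present in the corollary's wording and in the paper's own proof, and the geometric content -- that the generic zero-set together with its map to $BO(1)$ carries exactly the coefficient $a_n$ via Lemma~\ref{Lemma4.4} -- is correct in both.
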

\begin{proof}
The class $e_n(N,f)$ is represented by the zero-set $Z$ of a generic
section of $n\lambda$ with a classifying map $Z\to BO (1)$.
Forgetting the classifying map, we have $a_n=[Z]$. 
\end{proof}
\begin{proof}[Proof of Theorem 1.4]
Part (i) follows at once from Proposition \ref{Proposition3.1}.

For (ii), suppose that $a_k\not=0$ where $k\leq m-n$.
Then $e_n(M,f)\not=0$, by Lemma \ref{Lemma4.4},
and so,
by Proposition \ref{Proposition4.3}, every section of $n\lambda$ has
a zero.
The assertion follows from Lemma \ref{Lemma2.3}.

In part (iii), we are given that $e_n(M,f)=0$. 
By Proposition \ref{Proposition4.3},
we may assume that $n\lambda$ has a nowhere zero section,
and then, because $n>1$, that $M$ is connected and $\lambda$ is non-trivial,
by Proposition \ref{Proposition3.3}.
\end{proof}
\section{The BUP for higher codimension and \SW classes}
The bordism class of $(M,f)$ in $\NN_m(BO (1))$ 
is determined by
the collection of \SW numbers 
$$
w_{j_1}(M)w_{j_2}(M)\cdots
w_{j_s}(M)w_1(\lambda )^{m-\sum j_i} [ M],
$$
where $s\geq 0$, $j_1,\, \ldots ,\, j_s\geq 1$ and $\sum j_i\leq m$.
Our main result in this section
is a criterion for the vanishing of $e_n(M, f)$ in terms of
these characteristic numbers.
\begin{thm} \label{Theorem5.1} 
The class $e_n(M, f)$ vanishes if and only if
$$ 
w_{j_1}(M) \cdots w_{j_s}(M)w_1(\lambda)^{m-\Sigma j_i}[M]=0 
$$
for all  $j_1, \ldots ,j_s\geq 1$, $s\geq 0$, with
$\Sigma_{i=1}^{s} j_i\leq m-n$ {\rm (}that is, $m-\Sigma j_i\geq n${\rm )}.
\end{thm}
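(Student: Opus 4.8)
The plan is to deduce Theorem~\ref{Theorem5.1} by comparing the Stiefel--Whitney numbers of $[M,f]$ with those of its image $e_n(M,f)$. Recall that $e_n(M,f)=f_*[Z]=[Z,f|_Z]$, where $Z\subseteq M$ is the zero-set of a generic section of $n\lambda$; its Stiefel--Whitney numbers are the characteristic numbers $w_{j_1}(Z)\cdots w_{j_s}(Z)\,w_1\bigl((f|_Z)^*H\bigr)^{l}[Z]$ with $\sum j_i+l=m-n$. Since, as recalled at the beginning of this section, a class in $\NN_{k}(BO(1))$ vanishes if and only if all its Stiefel--Whitney numbers do, it is enough to show that the collection of Stiefel--Whitney numbers of $[Z,f|_Z]$ and the collection of numbers $w_{j_1}(M)\cdots w_{j_s}(M)\,w_1(\lambda)^{m-\sum j_i}[M]$ of $[M,f]$ with $\sum j_i\le m-n$ vanish simultaneously; in fact each member of either collection will be exhibited as a $\Zz/2$-linear combination of members of the other.

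First I would record the geometry of the inclusion $i:Z\hookrightarrow M$. Since $n\lambda\cong\lambda^{\oplus n}$ we have $w(n\lambda)=(1+w_1(\lambda))^{n}$, so by Remark~\ref{euler} the submanifold $Z$ is Poincar\'e dual in $M$ to $w_n(n\lambda)=w_1(\lambda)^{n}$, its normal bundle is $n\lambda|_Z$, and $(f|_Z)^*H=\lambda|_Z$. From $w(TM)|_Z=w(TZ)\cdot(1+w_1(\lambda)|_Z)^{n}$ and the nilpotence of $w_1(\lambda)$ we get
\[
 w(TZ)=i^*\Bigl(w(TM)\,(1+w_1(\lambda))^{-n}\Bigr)=:i^*\bar w ,
\]
and likewise $w_1\bigl((f|_Z)^*H\bigr)=i^*w_1(\lambda)$. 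Thus each $w_j(Z)$ is the restriction to $Z$ of an explicit universal polynomial $\bar w_j\in H^{j}(M;\Zz/2)$ in $w_*(M)$ and $w_1(\lambda)$; and since $w(TM)=\bar w\,(1+w_1(\lambda))^{n}$, each $w_j(M)$ is conversely a polynomial in $\bar w_*$ and $w_1(\lambda)$. The key identity is then: for a monomial $\mu=w_{j_1}(Z)\cdots w_{j_s}(Z)\,w_1\bigl((f|_Z)^*H\bigr)^{l}$ of degree $m-n$, writing $\mu=i^*\nu$ with $\nu$ the corresponding monomial in $\bar w_*$ and $w_1(\lambda)$, Remark~\ref{euler} gives $\mu[Z]=\bigl(\nu\cdot w_1(\lambda)^{n}\bigr)[M]$.

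The two implications now follow. Substituting the definition of the $\bar w_*$ shows that $\nu\cdot w_1(\lambda)^{n}$ is a $\Zz/2$-combination of monomials $w_{i_1}(M)\cdots w_{i_t}(M)\,w_1(\lambda)^{k}$ with $k\ge n$, i.e.\ $\sum i_r\le m-n$; hence every Stiefel--Whitney number of $e_n(M,f)$ is such a combination of numbers of $[M,f]$ with $\sum j_i\le m-n$, so vanishing of the latter forces $e_n(M,f)=0$. Conversely, a monomial $w_{j_1}(M)\cdots w_{j_s}(M)\,w_1(\lambda)^{k}$ with $k\ge n$ can be written as $\bigl(P\,w_1(\lambda)^{k-n}\bigr)\cdot w_1(\lambda)^{n}$, where $P$ is $w_{j_1}(M)\cdots w_{j_s}(M)$ re-expressed via the inverse substitution in $\bar w_*$ and $w_1(\lambda)$; expanding the degree-$(m-n)$ factor $P\,w_1(\lambda)^{k-n}$ into monomials $\nu$ and applying the key identity termwise writes its characteristic number as a $\Zz/2$-combination of Stiefel--Whitney numbers of $e_n(M,f)$, which all vanish when $e_n(M,f)=0$. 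I do not anticipate a genuine obstacle; the only point needing care is the bookkeeping --- checking that the change of variables $w_*(Z)\leftrightarrow\bar w_*$ is invertible over $\Zz/2$ (it is unitriangular with respect to degree), so that the two vanishing conditions really do match, and keeping the classifying map in play throughout, so that one computes with $e_n(M,f)=[Z,f|_Z]\in\NN_{m-n}(BO(1))$ rather than merely with the manifold $Z$.
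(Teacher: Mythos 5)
Your proof is correct and takes essentially the same approach as the paper's: both identify $w_j(Z)$ with $i^*w_j(TM-n\lambda)$, convert integration over $Z$ into integration over $M$ against $e(n\lambda)=w_1(\lambda)^n$ via Remark~\ref{euler}, and then conclude by the unitriangularity (in the power of $w_1(\lambda)$) of the change of variables between the two collections of characteristic numbers. The paper compresses this last step into the congruence ``modulo terms involving $w_1(\lambda)^{n'}$ for $n'>n$,'' whereas you spell out the inverse substitution via $\bar w = w(TM)(1+w_1(\lambda))^{-n}$.
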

\begin{proof}
The result is clearly true if $n>m$. 
If $n=m$, it reduces to $w_1(\lambda)^m[M]=0$.
For $n < m$, we look at the construction of $e_n(M, f)$ 
in terms of the zero-set $Z$. 
Now $e_n(M,f)\in\NN_{m-n}(BO (1))$ is zero if and only if
all the characteristic numbers
$w_{j_1}(Z) \cdots w_{j_s}(Z)w_1(\lambda)^{m-n-\Sigma j_i}[Z]$
with $\Sigma j_i\leq m-n$ vanish.
We must show that this condition is equivalent to the vanishing of 
all the $w_{j_1}(M) \cdots w_{j_s}(M)w_1(\lambda)^{m-\Sigma j_i}[M]$
with $\Sigma j_i\leq m-n$.
But this follows from the congruence
$$
w_{j_1}(Z) \cdots  w_{j_s}(Z)w_1(\lambda)^{m-n-\Sigma j_i}[Z]
$$
$$ = w_{j_1}(TM-n\lambda) \cdots
w_{j_s}(TM-n\lambda) w_1(\lambda)^{m-n-\Sigma j_i} e(n\lambda)[M]
$$
$$
\equiv 
w_{j_1}(M) \cdots w_{j_s}(M)w_1(\lambda)^{m-\Sigma j_i}[M] 
$$
modulo terms involving $w_1(\lambda)^{n'}$ for $n' > n$.
(The first equality is a special case of Remark \ref{euler}.)
\end{proof}
\begin{proof}[Proof of Theorem 1.5 and Corollary 1.6]
We have $a_k=0$ for all $k\leq m-n$ if and only if $e_n(M,f)=0$,
by Lemma \ref{Lemma4.4}.
The result of Theorem 1.5 is then immediate from Theorem \ref{Theorem5.1}.
Corollary 1.6 follows from the observation that $e_n(M,f)=0$
if $w_1(\lambda )^n=0$.
\end{proof}
\begin{cor} \label{Corollary5.2} 
If $w_1(\lambda )^n=0$, then $e_n(M,f)=0$.
But for $m>1$, if $n<m$, there is always a pair $(M',f')$ cobordant to $(M,f)$
such that $w_1(\lambda')^n\not=0$.
\end{cor}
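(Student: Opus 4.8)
The plan is to read off both assertions from results already in hand, so very little new is needed. For the first statement, suppose $w_1(\lambda)^n = 0$ in $H^n(M;\Zz/2\Zz)$; then automatically $w_1(\lambda)^k = 0$ for every $k \geq n$. By Theorem \ref{Theorem5.1}, the class $e_n(M,f)$ vanishes exactly when all the \SW numbers
$$
w_{j_1}(M)\cdots w_{j_s}(M)\,w_1(\lambda)^{m-\Sigma j_i}[M]
$$
with $\Sigma_{i=1}^s j_i \leq m-n$ are zero; but each such number carries the factor $w_1(\lambda)^{m-\Sigma j_i}$ with exponent $m-\Sigma j_i \geq n$, so it is already zero. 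Hence $e_n(M,f)=0$. (Equivalently one can argue at the level of a generic zero-set $Z$ of $n\lambda$: by Remark \ref{euler} the class $[Z]\in H_{m-n}(M;\Zz/2\Zz)$ is dual to $w_n(n\lambda)=w_1(\lambda)^n=0$, and the congruence used in the proof of Theorem \ref{Theorem5.1} then forces every characteristic number of $(Z,f|Z)$ to vanish, i.e. $e_n(M,f)=0$.)

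For the second statement, let $m>1$ and $n<m$. By Proposition \ref{Proposition3.1} the class $[M,f]$ has a representative $(M',f')$ with $M'$ connected containing an embedded $\Rr P^{m-1}$ on which $\lambda'=(f')^*H$ restricts to the Hopf bundle. I would then use the restriction homomorphism
$$
H^*(M';\Zz/2\Zz)\longrightarrow H^*(\Rr P^{m-1};\Zz/2\Zz)=\Zz/2\Zz[t]/(t^m),
$$
which carries $w_1(\lambda')$ to $w_1(H)=t$, hence $w_1(\lambda')^n$ to $t^n$. Since $n\leq m-1$, the class $t^n$ is non-zero in $H^n(\Rr P^{m-1};\Zz/2\Zz)$, so $w_1(\lambda')^n\neq 0$ in $H^n(M';\Zz/2\Zz)$, as required.

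I do not expect a genuine obstacle: the first part is just an inspection of Theorem \ref{Theorem5.1} once one observes that a vanishing $n$th power kills all higher powers, and the second part uses only the explicit embedded $\Rr P^{m-1}$ supplied by Proposition \ref{Proposition3.1} together with the elementary fact that powers of the generator of $H^1(\Rr P^{m-1};\Zz/2\Zz)$ survive up to degree $m-1$. The single point to keep track of is the degree bound $n\leq m-1$, which is precisely the hypothesis $n<m$; this is what prevents $t^n$ from dying in $H^*(\Rr P^{m-1};\Zz/2\Zz)$.
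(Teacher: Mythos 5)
Your proposal is correct and follows essentially the same route as the paper: the first assertion is read off from Theorem \ref{Theorem5.1} by noting that every characteristic number involved carries a factor $w_1(\lambda)^{m-\Sigma j_i}$ with exponent at least $n$, and the second uses the representative supplied by Proposition \ref{Proposition3.1} with its embedded $\Rr P^{m-1}$ to conclude $w_1(\lambda')^{n}\neq 0$. The only cosmetic difference is that the paper states $w_1(\lambda')^{m-1}\neq 0$ (which implies the claim since $n\leq m-1$), whereas you spell out the restriction to $H^*(\Rr P^{m-1};\Zz/2\Zz)$ explicitly.
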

\begin{proof}
If $w_1(\lambda )^n=0$, then all the characteristic numbers appearing in
Theorem \ref{Theorem5.1} are zero and, hence, $e_n(M,f)=0$.
For $m>1$,
Proposition \ref{Proposition3.1} supplies $(M',f')$ with $w_1(\lambda')^{m-1}
\not=0$.
\end{proof}
\section{Higher dimensional trivial summands}
In this section there is interest even when $n$ is greater than
the dimension $m$ of the manifold.

Having formulated our basic question in terms of sections of the
vector bundle $n\lambda$ over $M$,
one can go on to ask, for each $r > 1$, whether $(M,f)$ is cobordant to 
some $(M',f')$ such that $n \lambda'$ admits $r$ linearly
independent sections, that is, a trivial subbundle of dimension $r$. 
There is a similar criterion in a certain metastable range. 

Recall that the space $A_k(\Rr^r,\Rr^n)$
of linear maps $\Rr^r\to\Rr^n$ with kernel of dimension
$k$ is a manifold of dimension $(n+k)(r-k)=rn-k(n-r+k)$.

Consider the vector bundle $\Hom (\Rr ^r, n\lambda)$, with
fibre at $x \in M$ the space of linear maps 
$v:\Rr^r\to\Rr^n \otimes \lambda_x$,
and let $A_1(\Rr^r, n\lambda )$ be the subbundle with fibre
at $x$ the space of maps $v$ with $\dim\ker v=1$.

Suppose that $m<2(n-r+2)$. 
This dimensional restriction ensures that a generic smooth section 
$s$ of  $\Hom (\Rr^r, n\lambda )$ is transverse to $A_1(\Rr^r,n\lambda )$
and has the property that $\dim \ker s_x \leq 1$ for all $x \in M$.
The subspace 
$$ 
Z = \{ x \in M\mid \dim \,\,\ker s_x =1\}
$$
is a smooth submanifold of $M$, of codimension $n-r+1$, equipped with a map 
$Z \to P(\Rr^r)$ to the projective space $P(\Rr^r)$ of
lines in $\Rr^r$ given by the $1$-dimensional kernel. 

The section $s$ defines a section $\sigma$ of $n\lambda \otimes H_r$
over $M \times P(\Rr^r)$ taking $(x,[v])$ to $s_x(v)\otimes v$, 
where $x\in M$ and $v\in S(\Rr^r)\subseteq\Rr^r$ is a unit vector. 
Moreover, this section $\sigma$ is transverse to the zero-section 
and its zero-set is diffeomorphic to $Z$. 
We see, therefore, that the class $[Z\to P(\Rr^r)]$ in 
$\NN_{m+r-1-n}(M \times P(\Rr^r))$ corresponds under duality
to the Euler class 
$$ 
e_{ \NN}(\lambda \otimes H_r)^n \in \NN^n(M \times P(\Rr^r)), 
$$
where $H_r$ denotes the Hopf line bundle over $P(\Rr^r)$
and, in particular, it is independent of the section $s$. 
\begin{defn}
For $1\leq r\leq n$, let $e_{n,r}(M,f)\in\NN_{m+r-1-n}(BO (1)\times P(\Rr^r))$
be the image of $[Z\to P(\Rr^r)]$ under the homomorphism induced by $f$.
\end{defn}
\begin{lem}
The class $e_{n,r}(M,f)$ depends only on the bordism class $[M,f]\in
\NN_m(BO (1))$.
\end{lem}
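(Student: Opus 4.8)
The plan is to adapt the argument of Lemma \ref{bordisminv}, carrying it out over $M\times P(\Rr^r)$ in place of $M$ and with the line bundle $\lambda\otimes H_r$ in place of $\lambda$. First I would record the point, already noted in the construction preceding the statement, that $[Z\to P(\Rr^r)]\in\NN_{m+r-1-n}(M\times P(\Rr^r))$ is Poincar\'e dual to the cobordism Euler class $e_{\NN}(\lambda\otimes H_r)^n\in\NN^n(M\times P(\Rr^r))$; consequently it is represented by the zero-set of \emph{any} smooth section of $n\lambda\otimes H_r$ over the closed manifold $M\times P(\Rr^r)$ that is transverse to the zero-section, with reference map to $BO(1)\times P(\Rr^r)$ the inclusion $Z\hookrightarrow M\times P(\Rr^r)$ followed by $f\times\mathrm{id}$. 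In particular the metastable hypothesis $m<2(n-r+2)$, which serves only to realise $e_{n,r}(M,f)$ by an honest section of $\Hom(\Rr^r,n\lambda)$, plays no part in the invariance statement: once the class is expressed through a section of a line bundle, ordinary transversality to a zero-section suffices in every dimension.

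Next, suppose $(M,f)$ is null-bordant, say $\partial W=M$ with a map $h\colon W\to BO(1)$ extending $f$. I would form the line bundle $n(h^*H)\otimes H_r$ over $W\times P(\Rr^r)$, whose restriction to the boundary $M\times P(\Rr^r)$ is $n\lambda\otimes H_r$. After fixing a section $\sigma$ of the latter transverse to the zero-section, defining a representative $Z$ of $e_{n,r}(M,f)$, I would extend $\sigma$ over a collar and then generically over $W\times P(\Rr^r)$ to a section $\widetilde\sigma$ transverse to the zero-section and meeting the boundary exactly in $\sigma$. Its zero-set $\widetilde Z$ is a compact manifold of dimension $(m+1)+r-1-n$ with $\partial\widetilde Z=Z$; the projection $W\times P(\Rr^r)\to P(\Rr^r)$ restricts on $\widetilde Z$ to an extension of the map $Z\to P(\Rr^r)$, and $h$ composed with $\widetilde Z\hookrightarrow W\times P(\Rr^r)$ extends the reference map of $Z$. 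Hence $\widetilde Z$ is a null-bordism of $e_{n,r}(M,f)$ in $\NN_{m+r-1-n}(BO(1)\times P(\Rr^r))$, so $e_{n,r}$ annihilates $0\in\NN_m(BO(1))$. Since $e_{n,r}$ is additive under disjoint union, applying this to $M_0\sqcup M_1$ whenever $[M_0,f_0]=[M_1,f_1]$, and using that we work mod $2$, gives $e_{n,r}(M_0,f_0)=e_{n,r}(M_1,f_1)$, which is the assertion.

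The only step needing care is the relative transversality and corner-smoothing at the boundary of $W\times P(\Rr^r)$: one must check that $\widetilde\sigma$ can be chosen transverse to the zero-section while restricting to a prescribed transverse $\sigma$ on the boundary, so that $\widetilde Z$ is a manifold-with-boundary and $\partial\widetilde Z=Z$. This is precisely the relative transversality argument already employed in the proof of Lemma \ref{bordisminv}, applied here to the line bundle $n\lambda\otimes H_r$; the rest is bookkeeping with dimensions and reference maps.
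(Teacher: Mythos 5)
Your argument is correct and follows the same route as the paper: interpret $e_{n,r}(M,f)$ via the cobordism Euler class $e_\NN(\lambda\otimes H_r)^n$ over $M\times P(\Rr^r)$ (which, as you note, removes the metastable hypothesis from the invariance question) and then repeat the null-bordism argument of Lemma~\ref{bordisminv} with $W\times P(\Rr^r)$ in place of $W$. One small slip: $n\lambda\otimes H_r$ is a rank-$n$ vector bundle, not a line bundle, though this does not affect the transversality argument.
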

\begin{proof}
Using the interpretation of $e_{n,r}(M,f)$ in terms
$e_\NN (\lambda\otimes H_r)^n$ (which also allows us to define
$e_{n,r}(M,f)$ without the dimensional restriction: $m< 2(n-r+2)$), 
we can follow the proof of Lemma \ref{bordisminv}.
\end{proof}
Let $\kappa$ be the line bundle over $Z$ with fibre $\kappa_x=\ker s_x$ at 
$x\in Z$.
It is included as a subbundle of the trivial bundle $Z \times \Rr^r$ 
with orthogonal complement $\kappa^\perp$ of dimension $r-1$. We also
have an injective vector bundle map 
$\kappa^\perp = \Rr^r/\kappa \hookrightarrow n \lambda | Z$ 
given by the restriction of $s$ to
$Z$ and so an orthogonal decomposition of $n \lambda\, |\, Z$ as 
$\kappa^\perp \oplus \zeta$, say. The $(n-r+1)$-dimensional normal bundle
$\nu$ of $Z$ in $M$ is identified with $\kappa \otimes \zeta$. 
Then with a careful choice of a tubular neighbourhood $D(\nu)
\hookrightarrow M$ of $Z$, we can identify the restriction of $n \lambda $ 
to $D(\nu)$ with the pullback of $\kappa^\perp \oplus
\zeta$ in such a way that $s$ is given over 
$v \otimes w \,\,(v \in S(\kappa_x), w \in \zeta_x)$ 
in the tubular neighbourhood $D(\kappa \otimes \zeta)$ 
by the map 
$$
\Rr^r = \kappa_x^\perp \oplus \kappa_x \to \kappa_x^\perp \oplus \zeta_x
=\Rr^n\otimes\lambda_x
\quad :\quad  (u,tv)\mapsto (u,tw)\quad (t \in \Rr). 
$$
\begin{prop} \label{Proposition6.1} 
Suppose that $m < 2(n-r+1)$. 
Then a map $(M,f)$ is cobordant in 
$\NN_{m}(BO (1))$ to a map $(M',f')$
such that $n \lambda' = (f')^*(nH)$ has $r$ linearly independent sections 
if and only if 
$$
e_{n,r}(M,f) =0 \in \NN_{m+r-1-n}(BO (1) \times P(\Rr^r)). 
$$
\end{prop}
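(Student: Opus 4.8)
The plan is to follow the proof of Proposition~\ref{Proposition4.3} as closely as possible, with the zero-set of a section of $n\lambda$ replaced by the degeneracy locus $Z$ of a generic section $s$ of $\Hom (\Rr^r,n\lambda )$ and with a surgery performed along $Z$. The ``only if'' implication is immediate: if $(M,f)$ is cobordant to $(M',f')$ with $n\lambda'$ admitting $r$ linearly independent sections, then these assemble into a section of $\Hom (\Rr^r,n\lambda ')$ with $\ker s_x=0$ for all $x$, so its degeneracy locus is empty and $e_{n,r}(M',f')=0$; since $e_{n,r}(M,f)$ depends only on $[M,f]$, we get $e_{n,r}(M,f)=0$ as well. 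For the converse, fix a generic $s$ and keep the notation introduced just before the statement: $Z\subseteq M$ is a submanifold of codimension $n-r+1$ carrying the kernel-line bundle $\kappa$, classified by $\psi :Z\to P(\Rr^r)$, with $n\lambda\,|\,Z=\kappa^\perp\oplus\zeta$, normal bundle $\nu =\kappa\otimes\zeta$, and $s$ given near $Z$, in the chosen tubular neighbourhood $D(\nu )=D(\kappa\otimes\zeta )\hookrightarrow M$, by the local formula displayed there. The hypothesis $e_{n,r}(M,f)=0$ furnishes a compact manifold $V$ of dimension $m+r-n$ with $\partial V=Z$ together with a map $(g_0,g_1):V\to BO (1)\times P(\Rr^r)$ restricting to $(f|Z,\psi )$ on the boundary. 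Put $\lambda_V=g_0^*H$ and $\kappa_V=g_1^*H_r\subseteq V\times\Rr^r$, with orthogonal complement $\kappa_V^\perp$ of rank $r-1$; these restrict over $\partial V=Z$ to $\lambda\,|\,Z$, $\kappa$ and $\kappa^\perp$.

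The one genuinely new point is to extend this normal data over $V$, and it is here that the hypothesis $m<2(n-r+1)$ is used. The restriction of $s$ gives a fibrewise injective bundle map $\kappa^\perp =\Rr^r/\kappa\hookrightarrow n\lambda\,|\,Z$, and I would extend it, relative to $\partial V$, to a fibrewise injective bundle map $\kappa_V^\perp\hookrightarrow n\lambda_V$ over $V$. The obstructions to such an extension lie in the groups $H^{j+1}(V,\partial V;\pi_j(F))$, where $F$ is the space of injective linear maps $\Rr^{r-1}\to\Rr^n$; since $F$ is $(n-r)$-connected and $\dim V=m+r-n\le n-r+1$ --- which is precisely the inequality $m<2(n-r+1)$ --- these groups all vanish, so the extension exists. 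Taking orthogonal complements produces a rank-$(n-r+1)$ bundle $\zeta_V$ over $V$ with $\kappa_V^\perp\oplus\zeta_V\cong n\lambda_V$ and $\zeta_V\,|\,Z=\zeta$; set $\nu_V=\kappa_V\otimes\zeta_V$, which restricts to $\nu$ over $\partial V$.

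The surgery now proceeds as in Proposition~\ref{Proposition4.3}. Form
$$M'=(M-B(\nu ))\cup_{S(\nu )}S(\nu_V ),$$
with $f':M'\to BO (1)$ equal to $f$ on $M-B(\nu )$ and to the pullback of $g_0$ along $S(\nu_V)\to V$ on the second piece; these agree over $S(\nu )=S(\nu_V)\,|\,\partial V$, and $M'$ is a closed smooth $m$-manifold after smoothing corners. Over $M-B(\nu )$ the section $s$ already has $\ker s_x=0$, so it gives $r$ linearly independent sections there; over $S(\nu_V)$ one writes down a section of $\Hom (\Rr^r,n\lambda_V)$ at a unit vector $v\otimes w$ (with $v\in S(\kappa_{V,x})$, $w\in S(\zeta_{V,x})$) by $(u,tv)\mapsto (u,tw)$ under $\Rr^r=\kappa_{V,x}^\perp\oplus\kappa_{V,x}\to\kappa_{V,x}^\perp\oplus\zeta_{V,x}=(n\lambda_V)_x$, which is visibly injective on each fibre. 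By the local description of $s$ recalled above, these two sections agree over $S(\nu )$, so they glue to $r$ linearly independent sections of $n\lambda'$. Finally the trace of the surgery,
$$(M\times [0,1])\cup_{D(\nu )\times\{1\}}D(\nu_V ),$$
with the obvious extension of $f\times\mathrm{id}$ and $g_0$ to $BO (1)$, is (after smoothing corners) a bordism from $(M,f)$ to $(M',f')$.

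The main obstacle is the extension step of the second paragraph, i.e.\ the construction of $\zeta_V$: this is where the metastable hypothesis $m<2(n-r+1)$ is indispensable, via the connectivity estimate for the relevant space of injective linear maps together with obstruction theory relative to the boundary. The remaining steps are routine adaptations of the codimension-$n$ argument in Proposition~\ref{Proposition4.3}.
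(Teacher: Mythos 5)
Your proof is correct and follows the paper's argument essentially step by step: both bound $Z$ by a manifold $V$ equipped with a line bundle $\kappa_V\subseteq V\times\Rr^r$ and a map to $BO(1)$, extend the splitting $n\lambda|Z=\kappa^\perp\oplus\zeta$ to a splitting $n\lambda_V=\kappa_V^\perp\oplus\zeta_V$ using the metastable bound, and then surger along $D(\nu)$ exactly as in Proposition~\ref{Proposition4.3}, gluing the monomorphism $s$ on $M-B(\nu)$ to the tautological one on $S(\kappa_V\otimes\zeta_V)$. The only cosmetic difference is that you unpack the paper's one-line justification ``$\dim V=m+r-n\leq\dim\zeta=n-r+1$'' into an explicit obstruction-theoretic statement about the $(n-r)$-connectivity of the space of injective linear maps $\Rr^{r-1}\to\Rr^n$, which is a helpful clarification of precisely where the hypothesis $m<2(n-r+1)$ enters.
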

\begin{proof}
If there is such a representative $(M',f')$, then
$e_{n,r}(M',f')=0$ by construction.
But $e_{n,r}(M,f)=e_{n,r}(M',f')$, because the obstruction depends only
on $[M,f]\in\NN_m(BO (1))$.

Suppose that $e_{n,r}(M,f) = 0$. 
Then we have a compact manifold $V$ with boundary $\partial V=Z$,  
a map 
$g: V \rightarrow BO (1)$ extending $f$ and 
a $1$-dimensional subbundle $\kappa'$ of $V \times \Rr^r$ 
restricting to the subbundle $\kappa$ of $Z \times \Rr^r$ over $Z$.
The 
splitting of $n \lambda \,|\, Z$ as $\kappa^\perp \oplus \zeta$ extends to a 
splitting of $n(g^*H)$ as $(\kappa')^\perp \oplus \zeta'$,
since $\dim V = m+r-n \leq \dim \zeta = n-r+1$.

We use surgery,
as in the proof of Proposition \ref{Proposition4.3},
to construct a manifold 
$$ 
M' = (M - B(\nu) ) \cup_{S(\nu)}S(\kappa' \otimes \zeta').
$$
The section $s$ over $ M - B(\nu)$ is extended from 
$S(\nu) = S(\kappa \otimes \zeta) $ to $S(\kappa' \otimes
\zeta')$ by the same procedure to produce
a section $s'$ of $\Hom (\Rr^r,n\lambda')$ which is
everywhere monomorphic.
\end{proof}
There is a cohomological criterion, too, generalizing Theorem \ref{Theorem5.1}.
\begin{prop}
Let $1\leq r\leq n\leq m$.
Then $e_{n,r}(M,f)=0$ if and only if,
for $q=0,\ldots , r-1$,
$$
\binom{n}{q}
w_{j_1}(M) \cdots w_{j_s}(M)w_1(\lambda )^{m-\sum j_i}[M]=0
$$
for all $j_1,\ldots ,j_s\geq 1$, $s\geq 0$, such that $\sum_{j=1}^s j_i
\leq m-n+q$ {\rm (}that is, $m-\sum j_i\geq n-q${\rm )}. 
\end{prop}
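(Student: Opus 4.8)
The plan is to run the argument of Theorem~\ref{Theorem5.1} one dimension higher, over $M\times P(\Rr^r)$ in place of $M$. By definition $e_{n,r}(M,f)$ is the image under $f$ of the class in $\NN_{m+r-1-n}(M\times P(\Rr^r))$ dual to the cobordism Euler class $e_\NN (\lambda\otimes H_r)^n=e_\NN (n\lambda\otimes H_r)$, and, taking $Z\hookrightarrow M\times P(\Rr^r)$ to be the zero-set of a section of $n\lambda\otimes H_r$ transverse to the zero-section (so that $Z$ has normal bundle the restriction of $n\lambda\otimes H_r$), it is represented by the composite $Z\to M\times P(\Rr^r)\to BO (1)\times P(\Rr^r)$. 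Since a bordism class in $\NN_*(BO (1)\times P(\Rr^r))$ is detected by its \SW numbers, $e_{n,r}(M,f)=0$ if and only if
$$N_{I,a,b}:=\big\langle\, w_{j_1}(TZ)\cdots w_{j_s}(TZ)\, w_1(\lambda)^a\, c^b\,,\,[Z]\,\big\rangle=0$$
for every multi-index $I=(j_1,\ldots ,j_s)$ with all $j_i\geq 1$ and all integers $a,b\geq 0$ with $b\leq r-1$ and $|I|+a+b=m+r-1-n$; here $c=w_1(H_r)$, and $w_1(\lambda)$, $c$ denote also their restrictions to $Z$.

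The first step is to convert each $N_{I,a,b}$ into a characteristic number on $M\times P(\Rr^r)$ by the duality recalled in Remark~\ref{euler} applied to $Z\hookrightarrow M\times P(\Rr^r)$:
$$N_{I,a,b}=\big\langle\, w_{j_1}(\eta)\cdots w_{j_s}(\eta)\, w_1(\lambda)^a\, c^b\, w_n(n\lambda\otimes H_r)\,,\,[M\times P(\Rr^r)]\,\big\rangle ,$$
where $\eta=T(M\times P(\Rr^r))-n\lambda\otimes H_r$, so that in $H^*(M;\,\Zz /2\Zz )[c]/(c^r)$ one has $w(\eta)=w(TM)(1+c)^r(1+w_1(\lambda)+c)^{-n}$ and $w_n(n\lambda\otimes H_r)=(w_1(\lambda)+c)^n=\sum_{q=0}^{r-1}\binom nq w_1(\lambda)^{n-q}c^q$. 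Pairing with $[M\times P(\Rr^r)]$ amounts to extracting the coefficient of $c^{r-1}$ and then pairing with $[M]$, which gives
$$N_{I,a,b}=\sum_{q=0}^{r-1-b}\binom nq\,\Big\langle\,\big[\,\text{coefficient of }c^{r-1-b-q}\text{ in }w_{j_1}(\eta)\cdots w_{j_s}(\eta)\,\big]\, w_1(\lambda)^{a+n-q}\,,\,[M]\,\Big\rangle .$$
The coefficient of $c^p$ in $w_{j_1}(\eta)\cdots w_{j_s}(\eta)$ is a polynomial in the $w_i(M)$ and $w_1(\lambda)$ of degree $|I|-p$, so a direct degree count (using $a\geq 0$) shows that the $q$-th summand is a $\Zz /2\Zz$-linear combination of numbers $w_{k_1}(M)\cdots w_{k_t}(M)w_1(\lambda)^{m-\Sigma k_i}[M]$ with $\Sigma k_i\leq m-n+q$. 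This yields the ``if'' part at once: if $\binom nq\, w_{k_1}(M)\cdots w_{k_t}(M)w_1(\lambda)^{m-\Sigma k_i}[M]=0$ whenever $\Sigma k_i\leq m-n+q$, then $N_{I,a,b}=0$ for all $I,a,b$.

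For the converse, assume $e_{n,r}(M,f)=0$, fix $q_0\in\{0,\ldots ,r-1\}$ with $\binom{n}{q_0}$ odd, and let $K_0=(k_1,\ldots ,k_t)$ with $\Sigma k_i\leq m-n+q_0$. Take $I=K_0$, $b=r-1-q_0$ and $a=m-n+q_0-\Sigma k_i\,(\geq 0)$ in the formula above. The term with $q=q_0$ is $\binom{n}{q_0}$ times the coefficient of $c^0$ in $w_{k_1}(\eta)\cdots w_{k_t}(\eta)$ — that is, $w_{k_1}(TM-n\lambda)\cdots w_{k_t}(TM-n\lambda)$ — paired with $w_1(\lambda)^{m-\Sigma k_i}[M]$; expanding $w(TM-n\lambda)$ in terms of $w(TM)$ and $w_1(\lambda)$, this equals $w_{k_1}(M)\cdots w_{k_t}(M)w_1(\lambda)^{m-\Sigma k_i}[M]$ plus terms of the same form with a strictly smaller value of $\Sigma k_i$. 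Each term with $q<q_0$ involves the coefficient of a positive power of $c$ in $w_{k_1}(\eta)\cdots w_{k_t}(\eta)$, hence, by a degree count, only numbers of the form $w_{K'}(M)w_1(\lambda)^{m-|K'|}[M]$ with $|K'|<\Sigma k_i$. Since $\binom{n}{q_0}$ is odd, we conclude that $0=N_{K_0,a,b}$ equals $w_{k_1}(M)\cdots w_{k_t}(M)w_1(\lambda)^{m-\Sigma k_i}[M]$ plus a $\Zz /2\Zz$-linear combination of numbers of the same form with a strictly smaller value of $\Sigma k_i$. An induction on $\Sigma k_i$ — the base case $\Sigma k_i=0$ reading $w_1(\lambda)^m[M]=0$ — then gives $w_{k_1}(M)\cdots w_{k_t}(M)w_1(\lambda)^{m-\Sigma k_i}[M]=0$ for all multi-indices with $\Sigma k_i\leq m-n+q_0$, as required.

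The step needing most care is the bookkeeping: establishing the identity $w(\eta)=w(TM)(1+c)^r(1+w_1(\lambda)+c)^{-n}$ and that its reduction modulo $c$ is $w(TM-n\lambda)$; noting that the passage between the classes $w_i(TM)$ and the classes $w_i(TM-n\lambda)$ is triangular and unipotent with respect to the partial order by partition size; and checking the various degree counts invoked above. Beyond this, the proof is a routine adaptation of those of Theorem~\ref{Theorem5.1} and Lemma~\ref{Lemma4.4}.
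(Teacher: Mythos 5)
Your proof is correct and follows essentially the same route as the paper's: identify $e_{n,r}(M,f)$ as the dual of $e_{\NN}(\lambda\otimes H_r)^n$, use the duality of Remark~\ref{euler} together with the stable identity $TZ=(T(M\times P(\Rr^r))-n(\lambda\otimes H_r))|_Z$ to convert the \SW numbers of $Z$ into numbers on $M\times P(\Rr^r)$, and then extract the coefficient of $c^{r-1}$ after expanding $(w_1(\lambda)+c)^n$. The only difference is presentational: where the paper asserts that the vanishing condition ``can be rewritten'' through a sequence of triangular changes of basis, you make the triangular (unipotent) structure explicit and close the ``only if'' direction with a clean induction on $\Sigma k_i$, which is a slightly more careful account of the same bookkeeping.
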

\begin{proof}
Let $\mu$ be the pullback of $H_r$ to $Z$.
The class $e_{n,r}(M,f)$ vanishes if and only if
$$
w_{j_1}(Z)\cdots w_{j_s}(Z) w_1(\lambda )^kw_1(\mu )^l[Z]=0
$$
for all $j_i,\, k,\, l \geq 0$ such that $\sum j_i + k+l = m-n+r-1$.
We make the abbreviation
$$
w_{j_1,\ldots ,j_s}(\xi )=w_{j_1}(\xi )\cdots w_{j_s}(\xi )
$$
for any vector bundle $\xi$.
Now $Z$ embeds in $M\times P(\Rr^r)$ with normal bundle $n(\lambda\otimes H_r)$.
So
$w_{j_1\ldots j_s}(TZ) w_1(\lambda )^kw_1(\mu )^l[Z]=$
$$
w_{j_1,\ldots ,j_s}(TM -n(\lambda\otimes H_r)+rH_r)
w_1(\lambda\otimes H_r)^n w_1(\lambda )^kw_1(H_r)^l[M\times P(\Rr^r)].
$$
As in the proof of Theorem \ref{Theorem5.1}
the  condition for the vanishing of $e_{n,r}(M,f)$ can
be rewritten as:
$$
w_{j_1,\ldots ,j_s}(TM -n(\lambda\otimes H_r)+rH_r)
w_1(\lambda\otimes H_r)^{n+k}w_1(H_r)^l[M\times P(\Rr^r)]=0
$$
for all $j_i,\, k,\, l \geq 0$ with $\sum j_i +k +l=m-n+r-1$
and then as
$$
w_{j_1}(M)\cdots w_{j_s}(M) 
w_1(\lambda\otimes H_r)^{n}w_1(\lambda )^kw_1(H_r)^l[M\times P(\Rr^r)]=0
$$
for all $j_i,\, k,\, l \geq 0$ with $\sum j_i +k +l=m-n+r-1$.
Expanding $w_1(\lambda\otimes H_r)^n=(w_1(\lambda )+w_1(H_r))^n$,
we can write this as
$$
w_{j_1}(M)\cdots w_{j_s}(M) 
\binom{n}{r-1-l} w_1(\lambda )^{m-\sum j_i}[M]=0
$$
for all $j_i,\, k,\, l \geq 0$ with $\sum j_i +k +l=m-n+r-1$.
The statement of the proposition is obtained by
substituting $q$ for $r-1-l$.
\end{proof}
\begin{cor}
If $w_{n-q}(n\lambda )=0$ for $0\leq q <r$, then $e_{n,r}(M,f)=0$.
\end{cor}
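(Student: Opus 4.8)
The plan is to derive this directly from the cohomological criterion of the preceding proposition, using nothing more than the identity $w(n\lambda )=(1+w_1(\lambda ))^n$. The first step is to translate the hypothesis. Since $n\lambda$ is a direct sum of $n$ copies of $\lambda$, its total \SW class is $(1+w_1(\lambda ))^n$, so that
$$
w_{n-q}(n\lambda )=\binom{n}{n-q}w_1(\lambda )^{n-q}=\binom{n}{q}w_1(\lambda )^{n-q}
$$
in $H^{n-q}(M;\,\Zz /2\Zz )$. Thus the assumption that $w_{n-q}(n\lambda )=0$ for $0\leq q<r$ says precisely that $\binom{n}{q}w_1(\lambda )^{n-q}=0$ as a cohomology class, for each such $q$.

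Next I would feed this into the criterion of the previous proposition. Fix $q$ with $0\leq q\leq r-1$ and a \SW monomial $w_{j_1}(M)\cdots w_{j_s}(M)$ subject to $\sum j_i\leq m-n+q$, so that the exponent $m-\sum j_i$ is at least $n-q$. Then $w_1(\lambda )^{m-\sum j_i}$ factors as $w_1(\lambda )^{n-q}$ times $w_1(\lambda )^{(m-\sum j_i)-(n-q)}$, with the second exponent non-negative, and hence
$$
\binom{n}{q}w_{j_1}(M)\cdots w_{j_s}(M)w_1(\lambda )^{m-\sum j_i}[M]
$$
$$
=\bigl(\binom{n}{q}w_1(\lambda )^{n-q}\bigr)\, w_{j_1}(M)\cdots w_{j_s}(M)\,w_1(\lambda )^{(m-\sum j_i)-(n-q)}[M]=0 .
$$
Since this vanishing holds for every admissible $q$ and every such monomial, the proposition gives $e_{n,r}(M,f)=0$, which is the assertion.

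I do not expect any real obstacle here; the corollary is a formal consequence of the proposition. The one point to watch is the bookkeeping of exponents: the constraint $\sum j_i\leq m-n+q$ appearing in the proposition is exactly what guarantees $(m-\sum j_i)-(n-q)\geq 0$, so that the factorization of $w_1(\lambda )^{m-\sum j_i}$ used above is legitimate. The standing hypotheses $1\leq r\leq n\leq m$ of the proposition are inherited. (One could instead argue geometrically via the identification of $e_{n,r}(M,f)$ with the push-forward of the cobordism Euler class $e_\NN (\lambda\otimes H_r)^n$, but the computation above through the cohomological criterion is the most economical.)
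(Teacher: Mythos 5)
Your argument is correct and is exactly the paper's own: the paper's one-line proof cites the same identity $w_{n-q}(n\lambda)=\binom{n}{q}w_1(\lambda)^{n-q}$ and leaves the routine factorization of characteristic numbers to the reader, which you have simply written out in full.
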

\begin{proof}
This follows immediately, because 
$w_{n-q}(n\lambda )=\binom{n}{q}w_1(\lambda )^{n-q}$.
\end{proof}
\begin{rem}
The arguments in this section
may be generalized to replace $BO (1)$ by a general CW-complex
$B$ and $nH$ by an $n$-dimensional real vector bundle $\eta$ over
$B$.  
Given a pair $(M,f)$ consisting
of a closed $m$-manifold $M$ and a map $f: M\to B$, representing
a class in $\NN_m(B)$, 
there is a precise obstruction in $\NN_{m+r-1-n}(B\times P(\Rr^r))$,
coming from the dual of the Euler class $e_\NN (\eta\otimes H_r)$,
which vanishes if and only if $(M,f)$ is cobordant 
to a pair $(M',f')$ such that $(f')^*\eta$ has $r$
linearly independent sections.
\end{rem}

\end{document}